\newtheorem{theorem}{Theorem}[section]
\newtheorem{lemma}{Lemma}[section]
\newtheorem{example}{Example}[section]
\renewcommand{\section}{
         \setcounter{equation}{0}
         \@startsection {section}{1}{\z@}{-3.5ex plus -1ex minus
         -.2ex}{2.3ex plus .2ex}{\normalsize\bf}
}
\renewcommand{\subsection}{
         \@startsection {subsection}{1}{\z@}{-3.5ex plus -1ex minus
         -.2ex}{2.3ex plus .2ex}{\normalsize\bf}
} \catcode`\@=12
\def\reals{{\rm\vrule depth0ex width.4pt\kern-.08em R}}
\def\bbbz{{\mathchoice {\hbox{$\sf\textstyle Z\kern-0.4em Z$}}
{\hbox{$\sf\textstyle Z\kern-0.4em Z$}} {\hbox{$\sf\scriptstyle
Z\kern-0.3em Z$}} {\hbox{$\sf\scriptscriptstyle Z\kern-0.2em Z$}}}}
\newcommand{\nc}{\newcommand}
\nc{\W}{{\bf W}} \nc{\A}{{\bf A}} \nc{\bL}{{\bf L}} \nc{\bH}{{\bf
H}} \nc{\C}{{\cal C}}
\def\eq#1{(\ref{e:#1})}
\def\elabel#1{\label{e:#1}}
\begin{document}
\begin{center}
\Large\bf A New Class of Backward Stochastic Partial Differential
Equations with Jumps and Applications
\end{center}
\begin{center}
\large\bf Wanyang Dai
\end{center}
\begin{center}
\small Department of Mathematics\\
Nanjing University, Nanjing 210093, China\\
Email: nan5lu8@netra.nju.edu.cn\\
Date: 5 May 2011
\end{center}

\vskip 0.1 in
\begin{abstract}
We formulate a new class of stochastic partial differential
equations (SPDEs), named {\em high-order vector backward SPDEs
(B-SPDEs) with jumps}, which allow the high-order integral-partial
differential operators into both drift and diffusion coefficients.
Under certain type of Lipschitz and linear growth conditions, we
develop a method to prove the existence and uniqueness of adapted
solution to these B-SPDEs with jumps. Comparing with the existing
discussions on conventional backward stochastic (ordinary)
differential equations (BSDEs), we need to handle the
differentiability of adapted triplet solution to the B-SPDEs with
jumps, which is a subtle part in justifying our main results due to
the inconsistency of differential orders on two sides of the B-SPDEs
and the partial differential operator appeared in the diffusion
coefficient. In addition, we also address the issue about the
B-SPDEs under certain Markovian random environment and employ a
B-SPDE with strongly nonlinear partial differential operator in the
drift coefficient to illustrate the usage of our main results in
finance.\\

\noindent{\bf Key words and phrases:} Backward Stochastic Partial
Differential Equations with Jumps, High-Order Partial Differential
Operator, Vector Partial Differential Equation, Existence and Uniqueness,
Random Environment\\
\end{abstract}

\section{Introduction}

Motivated from mean-variance hedging (see, e.g.,
Dai~\cite{dai:meavar}) and utility based optimal portfolio choice
(see, e.g., Becherer~\cite{bec:bousol}, Musiela and
Zariphopoulou~\cite{muszar:stopar}) in finance, and multi-channel
(or multi-valued) image regularization such as color images in
computer vision and network application (see, e.g., Caselles {\em et
al.}~\cite{cassap:vecmed}, Tschumperl\'e and
Deriche~\cite{tscder:regort,tscder:conunc,tscder:anidif}, and
references therein), we formulate a new class of SPDEs, named {\em
high-order vector B-SPDEs with jumps}, which allow high-order
integral-partial differential operators ${\cal L}$ and ${\cal J}$
into both drift and diffusion coefficients as shown in the following
equation \eq{bspdef},
\begin{eqnarray}
\;\;\;\;\;V(t,x)&=&H(x)+\int_{t}^{T}{\cal L}(s^{-},x,V,\cdot)ds
+\int_{t}^{T}\left({\cal J}(s^{-},x,V,\cdot)
-\bar{V}(s^{-},x)\right)dW(s)
\elabel{bspdef}\\
&&-\int_{t}^{T}\int_{z>0}\tilde{V}(s^{-},x,z,\cdot)
\tilde{N}(\lambda ds,x,dz). \nonumber
\end{eqnarray}
where the operator ${\cal L}$ depends not only on
$V,\bar{V},\tilde{V}$ but also on their associated partial
derivatives, i.e., for each integer $k\geq 2$ and $m\geq 0$, ${\cal
L}$ and ${\cal J}$ are defined by
\begin{eqnarray}
{\cal L}(s,x,V,\cdot)&\equiv&{\cal
L}(s,x,V(s,x),V^{(k)}(s,x),\bar{V}(s,x),
\bar{V}^{(m)}(s,x),\tilde{V}(s,x),\cdot),\nonumber\\
{\cal J}(s,x,V,\cdot)&=&({\cal J}_{1}(s,x,V,\cdot),...,{\cal
J}_{d}(s,x,V,\cdot)),
\nonumber\\
{\cal J}_{i}(s,x,V,\cdot)&\equiv&{\cal
J}_{i}(s,x,V(s,x),V^{(k)}(s,x),\cdot),\;\;\;\;\;\;i\in\{1,...,d\}.
\nonumber
\end{eqnarray}
Under certain type of Lipschitz and linear growth conditions, we
prove the existence and uniqueness of adapted triplet solution
$(V,\bar{V},\tilde{V})$ to these B-SPDEs. When the partial
differential operator ${\cal L}$ depends only on $x,V,\bar{V}$, and
$\tilde{V}$ but not on their associated derivatives and ${\cal
L}=0$, our B-SPDEs with jumps reduce to conventional BSDEs with
jumps (see, e.g., Becherer~\cite{bec:bousol}, Dai~\cite{dai:meavar},
Tang and Li~\cite{tanli:neccon}).

BSDEs were first introduced by Bismut~\cite{bis:concon} and the
first result for the existence of an adapted solution to a
continuous nonlinear BSDE was obtained by Pardoux and
Peng~\cite{parpen:adasol}. Since then, numerous extensions along the
line have been conducted, such as, Tang and Li~\cite{tanli:neccon}
get the first adapted solution to a BSDE with Poisson jumps for a
fixed terminal time and Situ~\cite{sit:solbac} extended the result
to the case where the BSDE is with bounded random stopping time as
its terminal time and non-Lipschitz coefficient. Currently, BSDEs
are still an active area of research in both theory and
applications, see, e.g., Becherer~\cite{bec:bousol}, Cohen and
Elliott~\cite{cohell:genthe}, Cr\'epey and
Matoussi~\cite{cremat:refdou}, Dai~\cite{dai:meavar}, Lepeltier {\em
et al.}~\cite{lepmat:refbac}, Yin and Mao~\cite{yinmao:adasol}, and
references therein.

The study on SPDEs receives a great attention recently (see, e.g.,
Pardoux~\cite{par:stopar} and Hairer~\cite{hai:intsto}).
Particularly, Pardoux and Peng~\cite{parpen:bacdou} introduces a
system of semi-linear parabolic SPDE in a backward manner and
establish the existence and uniqueness of adapted solution to the
SPDE under smoothness assumptions on the coefficients, and moreover,
the authors in \cite{parpen:bacdou} also employ backward doubly SDEs
(BDSDE) to provide a probabilistic representation for the parabolic
SDE. Since then, numerous researches have been conducted in terms of
weak solution and stationary solution to the semi-linear SPDE (see,
e.g., Bally and Matoussi~\cite{balmat:weasol}, Zhang and
Zhao~\cite{zhazha:stasol}, and references therein). However, our
B-SPDEs exhibited in \eq{bspdef} are fundamentally different from
the SPDEs as introduced in Pardoux and Peng~\cite{parpen:bacdou} and
as studied in most of the existing researches in the following
aspects: First, our system formulation is a direct generalization of
the conventional BSDEs, i.e., both the drift and diffusion
coefficients of our B-SPDEs depend on the triplet
$(V,\bar{V},\tilde{V})$ and its associated partial derivatives not
just on $V$ and its associated partial derivatives; Second, our
B-SPDEs are based on high-order partial derivatives and are subject
to jumps. One special case of our B-SPDEs available in the
literature is the one derived in Musiela and
Zariphopoulou~\cite{muszar:stopar} for the purpose of
optimal-utility based portfolio choice, which is strongly nonlinear
in the sense that is addressed in Lions and
Souganidis~\cite{liosou:notaux}.

Note that the B-SPDEs presented in \eq{bspdef} are vector B-SPDEs
with jumps, which are motivated from various aspects such as
multi-channel image regularization in computer vision and network
application through vector PDEs (see, e.g., Caselles {\em et
al.}~\cite{cassap:vecmed}, Tschumperl\'e and
Deriche~\cite{tscder:regort,tscder:conunc,tscder:anidif}, and
references therein), coupling and synchronization in random dynamic
systems through vector SPDEs (see, e.g., Mueller~\cite{mue:couinv},
Chueshov and Schmalfu$\ss$~\cite{chusch:massla}, and references
therein).

To show our formulated system well-posed, we develop a method based
on a scheme used for conventional BSDEs (see, e.g., Yong and
Zhou~\cite{yonzho:stocon}) to prove the existence and uniqueness of
adapted solution to our B-SPDEs with jumps in \eq{bspdef} under
certain Lipschitz and linear growth conditions. One fundamental
issue we need to handle in the method is the differentiability of
the triplet solution to our B-SPDEs with jumps, which is a subtle
part in the analysis due to the inconsistency of differential orders
on two sides of the B-SPDEs and the partial differential operators
appeared in the diffusion coefficient. So more involved functional
spaces and techniques are required. In addition, although there is
no perfect theory in dealing with the strongly nonlinear SPDEs (see,
e.g., Pardoux~\cite{par:stopar}), our discussions about the adapted
solution to \eq{bspdef} can provide some reasonable interpretation
concerning the unique existence of adapted solution before a random
bankruptcy time to the strongly nonlinear B-SPDE derived in Musiela
and Zariphopoulou~\cite{muszar:stopar}.

In the paper, we also provide some discussion concerning our B-SPDEs
under random environment, e.g., the variable $x$ in \eq{bspdef} is
replaced by a continuous Markovian process $X(\cdot)$. To be
convenient for readers, we present a rough graph in
Figure~\ref{twouserregion} with respect to sample surfaces for a
solution to a B-SPDE and in terms of sample curves for a solution to
the B-SPDE under random environment.
\begin{figure}[tbh]
\centerline{\epsfxsize=3.0in\epsfbox{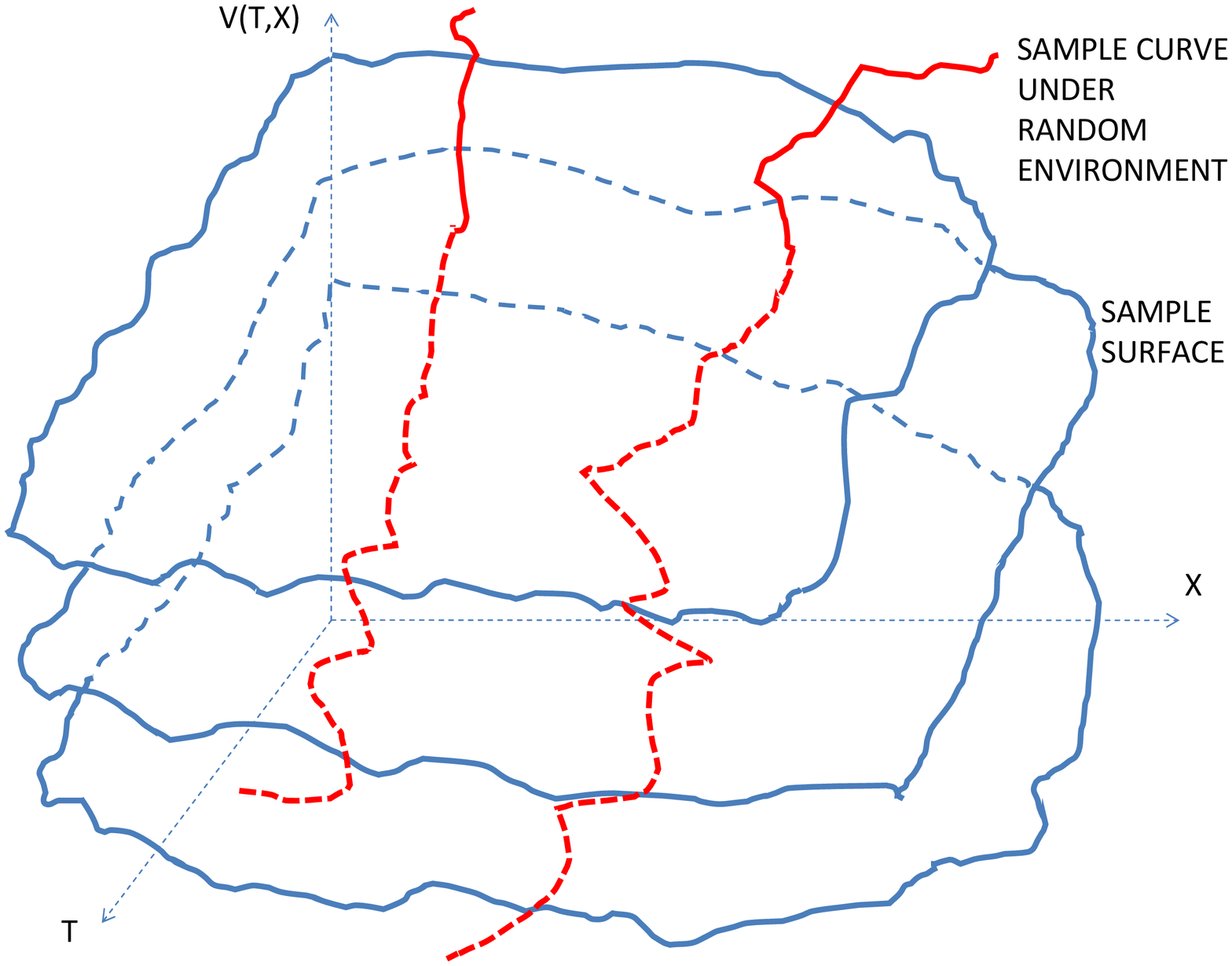}} \caption{\small
Sample surfaces and sample paths for a B-SPDE and a B-SPDE under
random environment} \label{twouserregion}
\end{figure}

The rest of the paper is organized as follows. In
Section~\ref{finited}, we first introduce a class of B-SPDEs with
jumps in finite space domain, then we state and prove our main
theorem. In Section~\ref{infran}, we extend our discussions in the
previous section to the case corresponding to infinite space domain
and under random environment. In Section~\ref{expm}, we use an
example to illustrate the usage of our main results in finance.

\section{A Class of B-SPDEs with Jumps in Finite Space Domain}
\label{finited}

\subsection{Required Probability and Functional
Spaces}

First of all, we introduce some notations to be used in the paper.
Let $(\Omega,{\cal F},P)$ be a fixed complete probability space on
which are defined a standard $d$-dimensional Brownian motion
$W\equiv\{W(t),t\in[0,T]\}$ with $W(t)=(W_{1}(t),...,W_{d}(t))'$ and
$h-$dimensional subordinator $L\equiv\{L(t),t\in[0,T]\}$ with
$L(t)\equiv(L_{1}(t),...,L_{h}(t))'$ and c\`adl\`ag sample paths for
some fixed $T\in[0,\infty)$ (see, e.g., Applebaum~\cite{app:levpro},
Bertoin~\cite{ber:levpro}, and Sato~\cite{sat:levpro} for more
details about subordinators and L\'evy processes), where the prime
denotes the corresponding transpose of a matrix or a vector.
Moreover, $W$, $L$ and their components are assumed to be
independent of each other. In addition, each subordinator $L_{i}$
with $i\in\{1,...,h\}$ can be represented by (see, e.g., Theorem
13.4 and Corollary 13.7 in Kallenberg~\cite{kal:foumod})
\begin{eqnarray}
&&L_{i}(t)=a_{i}t+\int_{(0,t]}\int_{z_{i}>0}z_{i}N_{i}(ds,dz_{i}),
\;t\geq 0 \elabel{subordrep}
\end{eqnarray}
where $N_{i}((0,t]\times A)\equiv\sum_{0<s\leq t}
I_{A}(L(s)-L_{i}(s^{-}))$ denotes a Poisson random measure with a
deterministic, time-homogeneous intensity measure
$ds\nu_{i}(dz_{i})$, where $I_{A}(\cdot)$ is the index function over
the set $A$, the constant $a_{i}$ is taken to be zero, and $\nu_{i}$
is the L\'{e}vy measure. Related to the probability space
$(\Omega,{\cal F},P)$, we suppose that there is a filtration
$\{{\cal F}_{t}\}_{t\geq 0}$ with ${\cal
F}_{t}\equiv\sigma\{W(s),L(\lambda s): 0\leq s\leq t\}$ for each
$t\in[0,T]$, $\lambda=(\lambda_{1},...\lambda_{h})'>0$, and
$L(\lambda s)=(L_{1}(\lambda_{1}s),...,L_{h}(\lambda_{h}s))'$.

Secondly, let ${\cal N}=\{1,2,...,\}$ and $D$ be a close connected
domain in $R^{p}$ for a given $p\in{\cal N}$. Then we can use
$C^{k}(D,R^{q})$ for each $k,p,q\in{\cal N}$ to denote the Banach
space of all functions $f$ having continuous derivatives up to the
order $k$ with the following uniform norm,
\begin{eqnarray}
&&\|f\|_{C^{k}(D,q)}=\max_{c\in\{1,...,k\}}\max_{j\in\{1,...,r(c)\}}
\sup_{x\in D}\left|f^{(c)}_{j}(x)\right| \elabel{fckupd}
\end{eqnarray}
for each $f\in C^{k}(D,R^{q})$, where $r(c)$ for each
$c\in\{0,1,...,k\}$ is the total number of the following partial
derivatives of the order $c$
\begin{eqnarray}
&&f^{(c)}_{r,(i_{1}...i_{p})}(x)=\frac{\partial^{c}f_{r}(x)}{\partial
x_{1}^{i_{1}}...\partial x_{p}^{i_{p}}} \elabel{difoperatorI}
\end{eqnarray}
with $i_{l}\in\{0,1,...,c\}$, $l\in\{1,...,p\}$, $r\in\{1,...,q\}$,
and $i_{1}+...+i_{p}=c$. Moreover, for the late purpose, we let
\begin{eqnarray}
f_{(i_{1},...,i_{p})}^{(c)}&\equiv&(f_{1,(i_{1},...,i_{p})}^{(c)},...,
f_{q,(i_{1},...,i_{p})}^{(c)}),
\elabel{difoperatoro}\\
f^{(c)}(x)&\equiv&(f_{1}^{(c)}(x),...,f_{r(c)}^{(c)}(x)),
\elabel{difoperator}
\end{eqnarray}
where each $j\in\{1,...,r(c)\}$ corresponds to a $p$-tuple
$(i_{1},...,i_{p})$ and a $r\in\{1,...,q\}$. In addition, let
$C^{\infty}(D,R^{q})$ denote the following Banach space, i.e.,
\begin{eqnarray}
&&C^{\infty}(D,R^{q})\equiv
\left\{f\in\bigcap_{k=1}^{\infty}C^{k}(D,R^{q}),
\|f\|_{C^{\infty}(D,q)}<\infty\right\} \elabel{cinfity}
\end{eqnarray}
where
\begin{eqnarray}
&&\|f\|^{2}_{C^{\infty}(D,q)}=\sum_{k=1}^{\infty}\xi(k)
\|f\|^{2}_{C^{k}(D,q)} \elabel{inftynorm}
\end{eqnarray}
for some discrete function with respect to $k\in\{0,1,2,...\}$,
which is fast decaying in $k$. For convenience, we take
$\xi(k)=e^{-k}$.

Thirdly, we introduce some measurable spaces to be used in the
sequel. Let $L^{2}_{{\cal F}}([0,T];R^{q})$ denote the set of all
$R^{q}$-valued measurable stochastic processes $Z(t,x)$ adapted to
$\{{\cal F}_{t},t\in[0,T]\}$ for each $x\in D$, which are in
$C^{\infty}(D,R^{q})$ for each fixed $t\in[0,T]$), such that
\begin{eqnarray}
&&E\left[\int_{0}^{T}\|Z(t)\|^{2}_{C^{\infty}(D,q)}dt\right]<\infty
\elabel{adaptednormI}
\end{eqnarray}
and let $L^{2}_{{\cal F},p}([0,T],R^{q})$ denote the corresponding
set of predictable processes (see, e.g., Definition 5.2 and
Definition 1.1 respectively in pages 21 and 45 of Ikeda and Watanabe
\cite{ikewat:stodif}). Moreover, let $L^{2}_{{\cal
F}_{T}}(\Omega;R^{q})$ denote the set of all $R^{q}$-valued, ${\cal
F}_{T}$-measurable random variables $\xi(x)$ for each $x\in D$,
where $\xi(x)\in C^{\infty}(D,R^{q})$ satisfies
\begin{eqnarray}
&&E\left[\|\xi\|^{2}_{C^{\infty}(D,q)}\right]<\infty.
\elabel{initialx}
\end{eqnarray}
In addition, let $L^{2}_{p}([0,T],$ $R^{h})$ be the set of all
$R^{h}$-valued predictable processes $\tilde{V}(t,x,z)=$
$(\tilde{V}_{1}(t,x,z),$ $...,$ $\tilde{V}_{h}(t,x,z))'$ for each
$x\in D$ and $z\in R_{+}^{h}$, satisfying
\begin{eqnarray}
&&E\left[\sum_{i=1}^{h}\int_{0}^{T}\int_{z_{i}>0}
\left\|\tilde{V}_{i}(t^{-},z)\right\|^{2}_{C^{\infty}(D,q)}
\nu_{i}(dz_{i})dt\right]<\infty \elabel{normsI}
\end{eqnarray}
and let
\begin{eqnarray}
&&L^{2}_{\nu,c}(D\times R^{h}_{+},R^{q\times
h})\equiv\left\{\tilde{v}: D\times R^{h}_{+}\rightarrow R^{q\times
h},\sum_{i=1}^{h}\int_{z_{i}>0}
\left\|\tilde{v}_{i}(z_{i})\right\|^{2}_{C^{c}(D,q)}
\nu_{i}(dz_{i})<\infty\right\} \elabel{lvrqh}
\end{eqnarray}
with the associated norm for any $\tilde{v}\in L^{2}_{\nu,c}(D\times
R^{h}_{+},R^{q\times h})$ and $c\in\{0,1,...,\infty\}$ as follows,
\begin{eqnarray}
&&\|\tilde{v}\|_{\nu,c}\equiv\left(\sum_{i=1}^{h}\int_{z_{i}>0}
\left\|\tilde{v}_{i}(z_{i})\right\|^{2}_{C^{c}(D,q)}
\lambda_{i}\nu_{i}(dz_{i})\right)^{\frac{1}{2}}. \elabel{vnorm}
\end{eqnarray}
In the end, we define
\begin{eqnarray}
&&{\cal Q}^{2}_{{\cal F}}([0,T])\equiv L^{2}_{{\cal
F}}([0,T],R^{q})\times L^{2}_{{\cal F},p}([0,T],R^{qd})\times
L^{2}_{p}([0,T],R^{q\times h}). \elabel{bunisoI}
\end{eqnarray}

\subsection{The B-SPDEs}

First of all, we introduce a class of $q$-dimensional B-SPDEs with
jumps and terminal random variable $H(x)\in L^{2}_{{\cal
F}_{T}}(\Omega;R^{q})$ for each $x\in D$ as presented in
\eq{bspdef}, where for each $s\in[0,T]$ and $z=(z_{1},...,z_{h})\in
R^{h}_{+}$,
\begin{eqnarray}
\bar{V}(s,\cdot)&=&\left(\bar{V}_{1}(s,\cdot),...,\bar{V}_{d}(s,\cdot)
\right)\in C^{\infty}(D,R^{q\times
d}),\nonumber\\
\tilde{V}(s,\cdot,z)&=&(\tilde{V}_{1}(s,\cdot,z_{1}),...,
\tilde{V}_{h}(s,\cdot,z_{h}))\in C^{\infty}(D,R^{q\times h}),
\nonumber\\
\tilde{N}(\lambda
ds,x,dz)&=&(\tilde{N}_{1}(\lambda_{1}ds,x,dz_{1}),...,
\tilde{N}_{h}(\lambda_{h}ds,x,dz_{h}))'. \nonumber
\end{eqnarray}
Moreover, in \eq{bspdef}, ${\cal L}$ is a $q$-dimensional
integral-partial differential operator satisfying, a.s.,
\begin{eqnarray}
&&\left\|\Delta{\cal L}^{(c)}(s,x,u,v)\right\|\leq
K_{D}\left(\|u-v\|_{C^{k+c}(D,q)}+\|\bar{u}-\bar{v}\|_{C^{m+c}(D,qd)}
+\|\tilde{u}-\tilde{v}\|_{\nu,c}\right) \elabel{blipschitz}
\end{eqnarray}
for any $(u,\bar{u},\tilde{u})$, $(v,\bar{v},\tilde{v})$ $\in
C^{k}(D,R^{q})\times C^{m}(D,R^{q\times d})\times
L^{2}_{\nu,c}(R^{h}_{+},R^{q\times h})$ with
$c\in\{0,1,...,\infty\}$, where $K_{D}$ depending on the domain $D$
is a nonnegative constant, $\|A\|$ is the largest absolute value of
entries (or components) of the given matrix (or vector) $A$, and
\begin{eqnarray}
&&\Delta{\cal L}^{(c)}(s,x,u,v) \equiv{\cal L}^{(c)}(s,x,u,
\cdot)-{\cal L}^{(c)}(s,x,v,\cdot).
\end{eqnarray}
Similarly, ${\cal J}=({\cal J}_{1},...,{\cal J}_{d})$ is a $q\times
d$-dimensional partial differential operator satisfying, a.s.,
\begin{eqnarray}
&&\|\Delta{\cal J}^{(c)}(s,x,u,v)\|\leq
K_{D}\left(\|u-v\|_{C^{m+c}(D,q)}\right). \elabel{blipschitzo}
\end{eqnarray}
Moreover, we suppose that
\begin{eqnarray}
\left\|{\cal L}^{(c)}(s,x,u,\cdot)\right\|&\leq&
K_{D}\left(\|u\|_{C^{k+c}(D,q)}+\|\bar{v}\|_{C^{m+c}(D,qd)}
+\left\|\tilde{v}\right\|_{\nu,c}\right),
\elabel{blipschitzo}\\
\left\|{\cal J}^{(c)}(s,x,u,\cdot)\right\|&\leq&
K_{D}\|u\|_{C^{m+c}(D,q)}. \elabel{blipschitzI}
\end{eqnarray}
\begin{example}
The following conventional linear partial differential operators
satisfy the conditions as stated in
\eq{blipschitz}-\eq{blipschitzI},
\begin{eqnarray}
({\cal
L}u)(t,x)&=&\sum_{i,j=1}^{p}a_{ij}(x)\frac{\partial^{2}u(t,x)}{\partial
x_{i}{\partial x_{j}}}+\sum_{j=1}^{d}b_{j}(x)\frac{\partial
u(t,x)}{\partial x_{j}}+c(x)u(t,x)
\nonumber\\
&&+\sum_{i,j=1}^{d}\bar{a}_{ij}(x)\frac{\partial^{2}\bar{u}(t,x)}{\partial
x_{i}{\partial x_{j}}}+\sum_{j=1}^{d}\bar{b}_{j}(x)\frac{\partial
\bar{u}(t,x)}{\partial x_{j}}+\bar{c}(x)\bar{u}(t,x) \nonumber\\
({\cal
J}u)(t,x)&=&\sum_{i,j=1}^{p}a_{ij}(x)\frac{\partial^{2}u(t,x)}{\partial
x_{i}{\partial x_{j}}}+\sum_{j=1}^{d}b_{j}(x)\frac{\partial
u(t,x)}{\partial x_{j}}+c(x)u(t,x), \nonumber
\end{eqnarray}
where $a_{ij}^{(c)}(x)$, $b_{j}^{(c)}(x)$, $c^{(c)}(x)$,
$\bar{a}_{ij}^{(c)}(x)$, $\bar{b}_{j}^{(c)}(x)$, and
$\bar{c}^{(c)}(x)$ are uniformly bounded over all $x\in D$ and
$i,j\in\{1,...,d\}$ and $c\in\{0,1,2,....\}$.
\end{example}
\begin{theorem}\label{bsdeyI}
Under conditions of \eq{blipschitz}-\eq{blipschitzI}, if ${\cal
L}(t,x,v,\cdot)$ and ${\cal J}(t,x,v,\cdot)$ are $\{{\cal
F}_{t}\}$-adapted for each fixed $x\in D$ and any given
$(v,\bar{v},\tilde{v})\in C^{\infty}(D,R^{q})\times
C^{\infty}(D,R^{q\times d})\times L^{2}_{\nu,\infty}(D\times
R^{h}_{+},R^{q\times h})$ with
\begin{eqnarray}
&&{\cal L}(\cdot,x,0,\cdot),{\cal J}(\cdot,x,0,\cdot)\in
L^{2}_{{\cal F}}\left([0,T],R^{q}\right), \elabel{blipic}
\end{eqnarray}
then the B-SPDE \eq{bspdef} has a unique adapted solution
satisfying, for each $x\in D$ and $z\in R_{+}^{h}$,
\begin{eqnarray}
&&(V(\cdot,x),\bar{V}(\cdot,x),\tilde{V}(\cdot,x,z))\in {\cal
Q}^{2}_{{\cal F}}([0,T]) \elabel{buniso}
\end{eqnarray}
where $V$ is a c\`adl\`ag process and the uniqueness is in the
sense: if there exists another solution
$(U(t,x),\bar{U}(t,x),\tilde{U}(t,x,z))$ as required, we have
\begin{eqnarray}
&&E\left[\int_{0}^{T}\left(\|U(t)-V(t)\|^{2}_{C^{\infty}(D,q)}+
\|\bar{U}(t)-\bar{V}(t)\|^{2}_{C^{\infty}(D,qd)}+\|\tilde{U}(t)
-\tilde{V}(t)\|_{\nu,\infty}^{2}\right)dt\right]=0.
\nonumber
\end{eqnarray}
\end{theorem}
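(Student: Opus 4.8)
The plan is to establish existence and uniqueness by a Picard-type contraction argument in the Banach space ${\cal Q}^{2}_{{\cal F}}([0,T])$, following the scheme for conventional BSDEs but carried out inside the infinite-dimensional $C^{\infty}(D,R^{q})$ topology. First I would freeze the triplet $(v,\bar{v},\tilde{v})\in{\cal Q}^{2}_{{\cal F}}([0,T])$ inside the coefficients, so that ${\cal L}(s,x)\equiv{\cal L}(s,x,v,\cdot)$ and ${\cal J}(s,x)\equiv{\cal J}(s,x,v,\cdot)$ become given adapted processes; by the growth conditions \eq{blipschitzo}--\eq{blipschitzI} together with \eq{blipic}, these processes lie in $L^{2}_{{\cal F}}([0,T],R^{q})$ at every derivative order. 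This reduces \eq{bspdef} to a linear B-SPDE whose solution I would build from the martingale representation theorem, thereby defining a solution map whose fixed point is the sought adapted triplet.

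Concretely, for each fixed $x\in D$ I would introduce the square-integrable martingale $M(t,x)\equiv E[H(x)+\int_{0}^{T}{\cal L}(s,x)ds\,|\,{\cal F}_{t}]$ and use the representation theorem for the filtration generated by $W$ and the Poisson random measures (Ikeda and Watanabe~\cite{ikewat:stodif}) to write $M(t,x)=M(0,x)+\int_{0}^{t}\phi(s^{-},x)dW(s)+\int_{0}^{t}\int_{z>0}\psi(s^{-},x,z)\tilde{N}(\lambda ds,dz)$. Setting $V(t,x)\equiv E[H(x)+\int_{t}^{T}{\cal L}(s,x)ds\,|\,{\cal F}_{t}]$, $\bar{V}(s,x)\equiv{\cal J}(s,x)+\phi(s,x)$, and $\tilde{V}(s,x,z)\equiv\psi(s,x,z)$, a direct comparison at $t=T$ shows that this triplet solves the linearized equation with precisely the diffusion structure ${\cal J}-\bar{V}$ appearing in \eq{bspdef}. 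This defines the solution map $\Phi:(v,\bar{v},\tilde{v})\mapsto(V,\bar{V},\tilde{V})$.

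The core estimate is then to show that $\Phi$ is a contraction on ${\cal Q}^{2}_{{\cal F}}([0,T])$ under an equivalent weighted norm $E[\int_{0}^{T}e^{\beta t}(\cdots)dt]$ for suitably large $\beta$ (equivalently, contracting first on a short interval $[T-\delta,T]$ and pasting the pieces backward in time). Applying the Lipschitz conditions \eq{blipschitz}--\eq{blipschitzo} to the difference of two inputs, together with the Burkholder--Davis--Gundy and Doob inequalities for the stochastic integrals, yields the usual estimate in which the time weight produces a factor below $1$. The subtlety specific to our setting is the loss of derivatives: the bound on $\Delta{\cal L}^{(c)}$ involves $\|u-v\|_{C^{k+c}(D,q)}$ rather than $\|u-v\|_{C^{c}(D,q)}$, so the $c$-th derivative of the output is controlled only by the $(k+c)$-th derivative of the input. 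This is exactly why the $C^{\infty}$ norm \eq{inftynorm} is weighted by the fast-decaying $\xi(k)=e^{-k}$: reindexing $j=k+c$ in the weighted sum $\sum_{c}\xi(c)\|u-v\|^{2}_{C^{k+c}(D,q)}$ produces a finite factor $e^{k}\|u-v\|^{2}_{C^{\infty}(D,q)}$, so the full $C^{\infty}$ norm of the output is dominated by that of the input with a fixed constant, which the time weighting then renders contractive.

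The main obstacle, and the step I expect to demand the most care, is the differentiability of the triplet in $x$. While $V(t,x)$ inherits smoothness from $H$ and ${\cal L}$ by differentiating under the conditional expectation and the $ds$-integral, the components $\bar{V}$ and $\tilde{V}$ arise abstractly from the martingale representation and are not manifestly smooth in $x$. I would resolve this by differentiating the frozen equation $c$ times, obtaining a linear B-SPDE for the formal derivative $(V^{(c)},\bar{V}^{(c)},\tilde{V}^{(c)})$ with terminal datum $H^{(c)}$ and forcing ${\cal L}^{(c)}$, and then showing that the martingale representation of the differentiated terminal variable coincides with the spatial derivative of $(\phi,\psi)$. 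Establishing that differentiation in $x$ commutes with the martingale representation --- so that $\bar{V}^{(c)}$ and $\tilde{V}^{(c)}$ are genuinely the $c$-th spatial derivatives of $\bar{V}$ and $\tilde{V}$ and lie in the spaces governed by \eq{normsI}--\eq{vnorm} --- is the delicate point that forces the $C^{\infty}(D,R^{q})$ topology upon us; once it is in place, the contraction estimate closes the argument and Banach's fixed point theorem delivers both existence and the stated uniqueness.
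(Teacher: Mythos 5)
Your proposal follows essentially the same route as the paper's proof: freezing the triplet and invoking the martingale representation theorem (your $\bar V={\cal J}+\phi$ is the same decomposition as the paper's, which folds $\int_0^T{\cal J}\,dW$ into the conditioned variable), establishing differentiability by differentiating the frozen equation and identifying the formal derivatives with the true spatial derivatives of the representation, and closing with a contraction in an exponentially time-weighted norm where the $\xi(k)=e^{-k}$ weights absorb the loss of $k$ derivatives exactly as in your reindexing $j=k+c$. The only cosmetic difference is that the paper organizes these three steps as separate lemmas and carries out the derivative-identification step via finite-difference quotients, It\^o's formula applied to $\mbox{Tr}(\Delta V^{(1)}_{(l),\delta})e^{2\gamma t}$, and an essential-supremum argument, which is the concrete implementation of the commutation you flag as the delicate point.
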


We divide the proof of Theorem~\ref{bsdeyI} into the following three
lemmas.
\begin{lemma}\label{martindecom}
Under the conditions of Theorem~\ref{bsdeyI}, for each fixed $x\in
D$, $z\in R_{+}^{h}$, and a triplet
\begin{eqnarray}
&&(U(\cdot,x),\bar{U}(\cdot,x),\tilde{U}(\cdot,x,z))\in {\cal
Q}^{2}_{{\cal F}}([0,T]),\elabel{threeu}
\end{eqnarray}
there exists another triplet
$(V(\cdot,x),\bar{V}(\cdot,x),\tilde{V}(\cdot,x,z)$ such that
\begin{eqnarray}
\;\;\;\;V(t,x)&=&H(x)+\int_{t}^{T}{\cal
L}(s^{-},x,U,\cdot)ds+\int_{t}^{T}\left({\cal J}(s^{-},x,U,\cdot)
-\bar{V}(s^{-},x)\right)dW(s)
\elabel{sigmanV}\\
&&-\int_{t}^{T}\int_{z>0}\tilde{V}(s^{-},x,z) \tilde{N}(\lambda
ds,x,dz), \nonumber
\end{eqnarray}
where $V$ is a $\{{\cal F}_{t}\}$-adapted c\`adl\`ag process,
$\bar{V}$ and $\tilde{V}$ are the corresponding predictable
processes, and for each $x\in D$,
\begin{eqnarray}
&&E\left[\int_{0}^{T}\|V(t,x)\|^{2}dt\right]<\infty,
\elabel{maxnormI}\\
&&E\left[\int_{0}^{T}\|\bar{V}(t,x)\|^{2}dt\right]<\infty,
\elabel{maxnormII}\\
&&E\left[\sum_{i=1}^{h}\int_{0}^{T}\int_{z_{i}>0}
\left\|\tilde{V}_{i}(t^{-},x,z)\right\|^{2}
\nu_{i}(dz_{i})dt\right]<\infty. \elabel{maxnormIII}
\end{eqnarray}
\end{lemma}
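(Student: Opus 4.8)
The plan is to fix $x \in D$ and treat \eq{sigmanV} as a backward SDE driven by the Brownian motion $W$ and the compensated Poisson measures $\tilde{N}_i$, whose driver is \emph{given} (i.e.\ depends on the fixed triplet $(U,\bar{U},\tilde{U})$ rather than on the unknown $(V,\bar{V},\tilde{V})$). First I would define the terminal-value-plus-drift process
\begin{eqnarray}
&&M(t,x) \equiv E\left[\left.H(x) + \int_{0}^{T} {\cal L}(s^{-},x,U,\cdot)\,ds \,\right|\, {\cal F}_t\right], \nonumber
\end{eqnarray}
and verify that it is a genuine square-integrable $\{{\cal F}_t\}$-martingale. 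For this I must check that the integrand ${\cal L}(s^{-},x,U,\cdot)$ lies in $L^2_{\cal F}([0,T],R^q)$ for the fixed $x$: this follows by combining the linear-growth bound \eq{blipschitzo} with the hypothesis \eq{blipic} that ${\cal L}(\cdot,x,0,\cdot) \in L^2_{\cal F}$, since the triplet $(U,\bar{U},\tilde{U})$ belongs to ${\cal Q}^2_{\cal F}([0,T])$ by \eq{threeu}, so its $C^\infty$-norms are square-integrable in the sense of \eq{adaptednormI}, \eq{normsI}. Together with $H(x) \in L^2_{{\cal F}_T}$ from \eq{initialx}, this makes $M(\cdot,x)$ a square-integrable martingale.

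Next I would invoke the martingale representation theorem for the filtration ${\cal F}_t = \sigma\{W(s),L(\lambda s):0\le s\le t\}$, generated by the Brownian motion together with the independent subordinators. Because $W$ and the $N_i$ carry all the randomness of this filtration (and the subordinator representation \eq{subordrep} expresses $L_i$ through the Poisson measures $N_i$ with intensity $ds\,\nu_i(dz_i)$), every such martingale admits a representation as a stochastic integral against $dW$ and against the compensated measures $\tilde{N}_i(\lambda_i ds, x, dz_i)$. This yields predictable integrands which I would name $\bar{V}(\cdot,x)$ and $\tilde{V}(\cdot,x,z)$, so that
\begin{eqnarray}
&&M(t,x) = M(0,x) - \int_{0}^{t} \bar{V}(s^{-},x)\,dW(s) - \int_{0}^{t}\int_{z>0} \tilde{V}(s^{-},x,z)\,\tilde{N}(\lambda ds,x,dz). \nonumber
\end{eqnarray}
The representation theorem simultaneously guarantees the square-integrability bounds \eq{maxnormII} and \eq{maxnormIII}, since it produces integrands whose $L^2$ norms are controlled by the $L^2$ norm of the martingale $M(\cdot,x)$.

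Finally I would \emph{define} $V(t,x) \equiv M(t,x) - \int_0^t {\cal L}(s^{-},x,U,\cdot)\,ds$ and verify by direct substitution, using the representation above together with the tower/conditioning identity $M(T,x) = H(x) + \int_0^T {\cal L}\,ds$ and $M(t,x) = V(t,x) + \int_0^t {\cal L}\,ds$, that \eq{sigmanV} holds; the ${\cal J}(s^{-},x,U,\cdot)dW(s)$ term is absorbed by reorganizing the Brownian integrand, absorbing the known $\int {\cal J}\,dW$ contribution into the definition of $\bar{V}$ (which is legitimate since ${\cal J}(s^{-},x,U,\cdot)$ is itself adapted and square-integrable by \eq{blipschitzI} and \eq{blipic}). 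That $V$ is c\`adl\`ag and adapted follows from the c\`adl\`ag martingale $M$ and the continuity of the Lebesgue integral term, and its square-integrability \eq{maxnormI} is inherited from $M \in L^2$. The main obstacle I anticipate is the justification of the martingale representation in this mixed Brownian--subordinator setting for fixed $x$, i.e.\ confirming that the filtration is exactly generated by $(W, N_i)$ so that no additional orthogonal martingale component appears; once the representation is secured, the remaining steps are bookkeeping.
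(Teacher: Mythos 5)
Your proposal is correct and follows essentially the same route as the paper: condition the terminal data plus accumulated drift on ${\cal F}_t$ to obtain a square-integrable martingale, invoke the martingale representation theorem for the Brownian--Poisson filtration (the paper cites Lemma 2.3 of Tang and Li for exactly the point you flag as the main obstacle), and then define $V$ by subtracting the running Lebesgue integral. The only cosmetic difference is that the paper includes the $\int {\cal J}\,dW$ term inside the conditional expectation defining the martingale $\hat{V}$, whereas you absorb it afterward into the Brownian integrand; the two bookkeepings are equivalent.
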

\begin{proof}
First of all, for each fixed $x\in D$, $z\in R_{+}^{h}$, and a
triplet $(U(\cdot,x),$ $\bar{U}(\cdot,x),$ $\tilde{U}(\cdot,x,z))$
as stated in \eq{threeu}, it follows from conditions
\eq{blipschitz}-\eq{blipic} that
\begin{eqnarray}
&&{\cal L}(\cdot,x,U,\cdot)\in L^{2}_{{\cal
F}}\left([0,T],R^{q}\right),\;{\cal J}(\cdot,x,U,\cdot)\in
L^{2}_{{\cal F}}([0,T],R^{q\times d}). \elabel{ladaptedI}
\end{eqnarray}
Now consider ${\cal L}$ and ${\cal J}$ in \eq{ladaptedI} as two new
starting ${\cal L}(\cdot,x,0,\cdot)$ and ${\cal
J}(\cdot,x,0,\cdot)$, then it follows from the Martingale
representation theorem (see, e.g., Lemma 2.3 in Tang and
Li~\cite{tanli:neccon}) that there exists a unique pair of
predictable processes $(\bar{V}(\cdot,x),\tilde{V}(\cdot,x,z))$
which are square-integrable for each $x\in D$ in the senses of
\eq{maxnormII}-\eq{maxnormIII} such that
\begin{eqnarray}
\;\;\;\;\hat{V}(t,x)&\equiv& \left.E\left[H(x) +\int_{0}^{T}{\cal
L}(s^{-},x,U,\cdot)ds+\int_{0}^{T}{\cal
J}(s^{-},x,U,\cdot)dW(s)\right|{\cal F}_{t}\right]
\elabel{sigmanI}\\
&=&\hat{V}(0,x)+\int_{0}^{t}\bar{V}(s^{-},x)dW(s) +\int_{0}^{t}
\int_{z>0}\tilde{V}(s^{-},x,z) \tilde{N}(\lambda ds,x,dz)
\nonumber
\end{eqnarray}
which implies that
\begin{eqnarray}
\;\;\;\hat{V}(0,x)&=&H(x)+\int_{0}^{T}{\cal L}(s^{-},x,U,\cdot)ds
+\int_{0}^{T}{\cal J}(s^{-},x,U,\cdot)dW(s)
\elabel{sigmanII}\\
&&-\int_{0}^{T}\bar{V}(s^{-},x)dW(s) -\int_{0}^{T}\int_{z>0}
\tilde{V}(s^{-},x,z)\tilde{N}(\lambda ds,x,dz). \nonumber
\end{eqnarray}
Moreover, due to the Corollary in page 8 of
Protter~\cite{pro:stoint}, $\hat{V}(\cdot,x)$ can be taken as a
c\`adl\`ag process. Now we define a process $V$ as follows,
\begin{eqnarray}
V(t,x)&\equiv& E\left[H(x) +\int_{t}^{T}{\cal
L}(s^{-},x,U,\cdot)ds+\left.\int_{t}^{T}{\cal
J}(s^{-},x,U,\cdot)dW(s) \right|{\cal F}_{t}\right]
\elabel{sigmanIII}
\end{eqnarray}
Then by simple calculation, we know that $V(\cdot,x)$ is
square-integrable in the sense of \eq{maxnormI}, and moreover, it
follows from \eq{sigmanI}-\eq{sigmanIII} that
\begin{eqnarray}
V(t,x)&=&\hat{V}(t,x)-\int_{0}^{t}{\cal L}(s^{-},x,U,\cdot)ds
-\int_{0}^{t}{\cal J}(s^{-},x,U,\cdot)dW(s) \elabel{sigmanIV}
\end{eqnarray}
which indicates that $V(\cdot,x)$ is a c\`adl\`ag process.
Furthermore, for a given triplet $(U(\cdot,x),$ $\dot{U}(\cdot,x)$,
$\bar{U}(\cdot,x),$ $\tilde{U}(\cdot,x,z))$, it follows from
\eq{sigmanI}-\eq{sigmanII} and \eq{sigmanIV} that the corresponding
triplet $(V(\cdot,x),$ $\bar{V}(\cdot,x),$ $\tilde{V}(\cdot,x,z))$
satisfies the equation \eq{sigmanV} as stated in the lemma,  which
also implies that
\begin{eqnarray}
\;\;\;\;\;\;V(t,x)&\equiv&V(0,x)-\int_{0}^{t}{\cal
L}(s^{-},x,U,\cdot)ds -\int_{0}^{t}\left({\cal J}(s^{-},x,U,\cdot)
-\bar{V}(s^{-},x)\right)dW(s)
\elabel{sigmanVI}\\
&&+\int_{0}^{t}\int_{z>0}\tilde{V}(s^{-},x,z) \tilde{N}(\lambda
ds,x,dz). \nonumber
\end{eqnarray}
Hence we complete the proof of Lemma~\ref{martindecom}. $\Box$
\end{proof}
\begin{lemma}\label{differentiableV}
Under the conditions of Theorem~\ref{bsdeyI}, for each fixed $x\in
D$, $z\in R_{+}^{h}$, and a triplet as in \eq{threeu}, we define
$V(t,x),$ $\bar{V}(t,x),$ $\tilde{V}(t,x,z)$ through \eq{sigmanV}.
Then $(V^{(c)}(\cdot,x)$, $\bar{V}^{(c)}(\cdot,x)$,
$\tilde{V}^{(c)}(\cdot,x,z))$ for each $c\in\{0,1,...,\}$ exists
a.s. and satisfies a.s.
\begin{eqnarray}
V^{(c)}_{(i_{1}...i_{p})}(t,x)&=&H^{(c)}_{(i_{1}...i_{p})}(x)
+\int_{t}^{T}{\cal L}^{(c)}_{(i_{1}...i_{p})}(s^{-},x,U,\cdot)ds
\elabel{pdsigmanV}\\
&&+\int_{t}^{T}\left({\cal
J}^{(c)}_{(i_{1}...i_{p})}(s^{-},x,U,\cdot)
-\bar{V}^{(c)}_{(i_{1}...i_{p})}(s^{-},x)\right)dW_{i}(s)
\nonumber\\
&&-\int_{t}^{T}\int_{z>0}\tilde{V}^{(c)}_{(i_{1}...i_{p})}
(s^{-},x,z)\tilde{N}(\lambda ds,x,dz), \nonumber
\end{eqnarray}
where $i_{1}+...+i_{p}=c$ and $i_{l}\in\{0,1,...,c\}$ with
$l\in\{1,...,p\}$. Moreover,  $V^{(c)}_{(i_{1}...i_{p})}$ for each
$c\in\{0,1,...\}$ is a $\{{\cal F}_{t}\}$-adapted c\`adl\`ag
process, $\bar{V}^{(c)}_{(i_{1}...i_{p})}$ and
$\tilde{V}^{(c)}_{(i_{1}...i_{p})}$ are the corresponding
predictable processes, which are square-integrable in the senses of
\eq{maxnormI}-\eq{maxnormIII}.
\end{lemma}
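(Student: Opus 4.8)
The plan is to obtain \eq{pdsigmanV} by differentiating the integral equation \eq{sigmanV} term by term in the spatial variable $x$ and by justifying that the operator $\partial^{c}/\partial x_{1}^{i_{1}}\cdots\partial x_{p}^{i_{p}}$ commutes with the Lebesgue integral in $ds$, the It\^o integral in $dW$, and the compensated Poisson integral in $\tilde N$. Since all the data live in the $C^{\infty}(D,\cdot)$--valued $L^{2}$ spaces with the fast--decaying norm \eq{inftynorm}, the natural device is to argue through first--order difference quotients in each coordinate direction $e_{l}$ and to induct on the differential order $c$, the base case $c=0$ being exactly Lemma~\ref{martindecom}.

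First I would record the regularity of the inputs. Because membership $H\in L^{2}_{{\cal F}_{T}}(\Omega;R^{q})$ forces $H(x)\in C^{\infty}(D,R^{q})$, every $H^{(c)}_{(i_{1}\ldots i_{p})}$ exists; and because the fixed triplet $(U,\bar U,\tilde U)$ lies in $C^{\infty}(D,\cdot)$, the growth bounds \eq{blipschitzo}--\eq{blipschitzI} show that each $\|{\cal L}^{(c)}(s^{-},x,U,\cdot)\|$ and $\|{\cal J}^{(c)}(s^{-},x,U,\cdot)\|$ is dominated by the finite $C^{k+c}$, $C^{m+c}$ and $\|\cdot\|_{\nu,c}$ norms of $U,\bar U,\tilde U$, so that ${\cal L}^{(c)}(\cdot,x,U,\cdot)$ and ${\cal J}^{(c)}(\cdot,x,U,\cdot)$ are square--integrable adapted processes for every $c$.

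Next, for a multi--index of order $c$ I would form the conditional--expectation martingale obtained by replacing $H,{\cal L},{\cal J}$ in \eq{sigmanI} by $H^{(c)},{\cal L}^{(c)},{\cal J}^{(c)}$, and apply the martingale representation theorem (as in Lemma~\ref{martindecom}) to produce predictable integrands that I name $\bar V^{(c)}_{(i_{1}\ldots i_{p})}$ and $\tilde V^{(c)}_{(i_{1}\ldots i_{p})}$, square--integrable in the senses \eq{maxnormII}--\eq{maxnormIII}. The substance of the lemma is then to identify these with the genuine $x$--derivatives of $\bar V,\tilde V$. To do so I would subtract the order--$(c-1)$ identity at $x+he_{l}$ and at $x$, divide by $h$, and pass to the limit $h\to0$: the $ds$--term converges by dominated convergence using the growth bound, the $dW$--term by the It\^o isometry $E|\int g\,dW|^{2}=E\int|g|^{2}ds$, and the $\tilde N$--term by the compensated Poisson isometry $E|\int\!\!\int\psi\,\tilde N(\lambda ds,dz)|^{2}=E\int\!\!\int|\psi|^{2}\lambda\nu(dz)ds$. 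The Lipschitz condition \eq{blipschitz} together with these isometries gives that the difference--quotient triplet is Cauchy in the relevant $L^{2}$ norm, so its limit exists and furnishes $(V^{(c)},\bar V^{(c)},\tilde V^{(c)})$ satisfying \eq{pdsigmanV}; that this limit is genuinely the a.s. pointwise derivative, rather than merely an $L^{2}$ limit, is where the fast--decaying $C^{\infty}$ norm is used, since convergence in $\|\cdot\|_{C^{\infty}(D,q)}$ controls every derivative order uniformly on $D$ at once. The adaptedness, c\`adl\`ag property and square--integrability of the derivative processes are then inherited exactly as in Lemma~\ref{martindecom}.

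The main obstacle I anticipate is precisely the interchange of $\partial^{c}_{x}$ with the stochastic and jump integrals: one cannot differentiate under the It\^o sign by ordinary dominated convergence, so the whole argument must be routed through $L^{2}$ difference quotients and the two isometries, and the limits must be taken in the $C^{\infty}(D,\cdot)$--valued norm to recover pointwise differentiability. A secondary but persistent difficulty is the bookkeeping forced by the mismatched differential orders noted in the introduction: the bound on ${\cal L}^{(c)}$ consumes the $C^{k+c}$ norm of $U$ while the bound on ${\cal J}^{(c)}$ consumes only $C^{m+c}$, so at each inductive step one must verify that the $C^{\infty}$ regularity of $U,\bar U,\tilde U$ supplies all the derivative orders the right--hand side demands and that the weighted sum in \eq{inftynorm} still converges.
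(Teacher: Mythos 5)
Your overall strategy is the same as the paper's: define the candidate triplet $(V^{(c)},\bar V^{(c)},\tilde V^{(c)})$ by running the equation \eq{sigmanV} with ${\cal L},{\cal J},H$ replaced by their partial derivatives, then identify it with the true derivative by controlling coordinate-wise difference quotients via the Lipschitz/growth conditions, and induct on $c$. Your preliminary observations (regularity of $H$ and of ${\cal L}^{(c)}(\cdot,x,U,\cdot)$, ${\cal J}^{(c)}(\cdot,x,U,\cdot)$ from \eq{blipschitzo}--\eq{blipschitzI}, and the bookkeeping of the mismatched orders $k+c$ versus $m+c$) match what the paper uses.

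There is, however, a genuine gap at the decisive step. You establish that the difference quotients converge in $L^{2}$ (via the It\^o and compensated-Poisson isometries) and then assert that the limit is the \emph{almost sure pointwise} derivative ``because convergence in $\|\cdot\|_{C^{\infty}(D,q)}$ controls every derivative order uniformly on $D$.'' That norm controls uniformity in $x$ and in the derivative order, but it does nothing to convert $L^{2}(\Omega)$ convergence along $h\to 0$ into almost-sure convergence of the full continuum limit: $L^{2}$ convergence only yields a.s. convergence along subsequences, with the exceptional null set depending on the subsequence, whereas existence of $\partial V/\partial x_{l}$ at a fixed $(t,x)$ requires a single full-measure event on which the quotient converges for \emph{all} $\delta\to 0$. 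The paper closes exactly this gap by applying It\^o's formula to $\mathrm{Tr}\bigl(\Delta V^{(1)}_{(l),\delta}\bigr)e^{2\gamma t}$ to get the a priori bound \eq{pvitod}, then taking an \emph{essential supremum over} $\delta\in[0,\sigma]$ (using Lemma 1.3 of Peskir and Shiryaev), passing $\sigma\to 0$ by dominated convergence (the domination coming from the mean-value theorem plus \eq{blipschitzo}), and invoking Fatou together with the monotonicity of the essential supremum in $\sigma$ to conclude \eq{zetazero} almost surely. Some device of this kind --- a uniform-in-$\delta$ control that survives the passage to an a.s. statement --- is missing from your argument and needs to be supplied; without it the identification of the $L^{2}$ limit with the pointwise spatial derivative, and hence the validity of \eq{pdsigmanV} as an a.s. identity, is not justified.
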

\begin{proof}
First of all, we show that the claim in the lemma is true for $c=1$.
To do so, for each given $t\in[0,T],x\in D,z\in R_{+}^{h}$ and
$(U(t,x),\bar{U}(t,x),\tilde{U}(t,x,z)$ as in the lemma, let
\begin{eqnarray}
&&(V^{(1)}_{(l)}(t,x),\bar{V}^{(1)}_{(l)}(t,x),
\tilde{V}^{(1)}_{(l)}(t,x,z))
\elabel{tripletdv}
\end{eqnarray}
be defined through \eq{sigmanV} where ${\cal L}$ and ${\cal J}$ are
replaced by their first-order partial derivatives ${\cal
L}^{(1)}_{(l)}$ and ${\cal J}^{(1)}_{(l)}$ in terms of $x_{l}$ with
$l\in\{1,...,p\}$. Then we can show that the triplet defined in
\eq{tripletdv} for each $l$ is indeed the required first-order
partial derivative of $(V,\bar{V},\tilde{V})$ that is defined
through \eq{sigmanV} for the given $(U,\bar{U},\tilde{U})$.

As a matter of fact, for each
$f\in\{U,\bar{U},\tilde{U},V,\bar{V},\tilde{V},\tilde{N}\}$, small
enough positive constant $\delta$, and $l\in\{1,...,p\}$, define
\begin{eqnarray}
&&f_{(l),\delta}(t,x)\equiv f(t,x+\delta e_{l}),\elabel{fdeltan}
\end{eqnarray}
where $e_{l}$ is the unit vector whose $l$th component is one and
others are zero. Moreover, let
\begin{eqnarray}
&&\Delta f^{(1)}_{(l),\delta}(t,x)
=\frac{f_{(l),\delta}(t,x)-f(t,x)}{\delta}-f^{(1)}_{(l)}(t,x)
\elabel{partialII}
\end{eqnarray}
for each $f\in\{U,\bar{U},\tilde{U},V,\bar{V},\tilde{V}\}$. In
addition, let
\begin{eqnarray}
\Delta{\cal I}^{(1)}_{(l),\delta}(s,x,U)
&=&\frac{1}{\delta}\left({\cal I}(s,x+\delta e_{l},U(s,x+\delta
e_{l}),\cdot)-{\cal I}(s,x,U(s,x),\cdot)\right)
\elabel{deltasqr}\\
&&-{\cal I}^{(1)}_{(l)}(s,x,U(s,x),\cdot) \nonumber
\end{eqnarray}
for each ${\cal I}\in\{{\cal L},{\cal J}\}$. Then, by applying the
Ito's formula (see, e.g., Theorem 1.14 and Theorem 1.16 in pages 6-9
of $\emptyset$ksendal and Sulem~\cite{okssul:appsto}) to the
function
\begin{eqnarray}
&&\zeta(\Delta V^{(1)}_{(l),\delta}(t,x))\equiv\mbox{Tr}\left(\Delta
V_{(l),\delta}^{(1)}(t,x)\right)e^{2\gamma t}\nonumber
\end{eqnarray}
for some $\gamma>0$, where Tr$(A)$ denotes the trace of the matrix
$A'A$ for a given matrix $A$, we have
\begin{eqnarray}
&&\zeta(\Delta V^{(1)}_{(l),\delta}(t,x))+\int_{t}^{T}
\mbox{Tr}\left(\Delta{\cal J}^{(1)}_{(l),\delta}(s,x,U)
-\Delta\bar{V}^{(1)}_{(l),\delta}(s,x) \right)e^{2\gamma s}ds
\elabel{pvitod}\\
&&+\int_{t}^{T}\int_{z_{j}>0}
\mbox{Tr}\left(\Delta\tilde{V}_{(l),\delta}^{(1)}(s^{-},x,z)
\right)e^{2\gamma s}\tilde{N}(\lambda ds,x,dz)
\nonumber\\
&=&2\int_{t}^{T}\left(-\gamma\mbox{Tr}\left(\Delta
V_{(l),\delta}^{(1)}(s,x)\right)+\left(\Delta
V_{(l),\delta}^{(1)}(s,x)\right)' \left(\Delta{\cal
L}^{(1)}_{(l),\delta}(s,x,U)\right)\right)e^{2\gamma s}ds-M(t)
\nonumber\\
&\leq&\left(-2\gamma+\frac{3K^{2}_{D}}{\hat{\gamma}}\right)
\int_{t}^{T}\mbox{Tr}\left(\Delta
V_{(l),\delta}^{(1)}(s,x)\right)e^{2\gamma s}ds
+\hat{\gamma}\int_{t}^{T}\left\|\Delta{\cal
L}^{(1)}_{(l),\delta}(s,x,U)\right\|^{2}e^{2\gamma s}ds-M(t)
\nonumber\\
&=&\hat{\gamma}\int_{t}^{T}\left\|\Delta{\cal
L}^{(1)}_{(l),\delta}(s,x,U)\right\|^{2}e^{2\gamma
s}ds-M_{\delta}(t) \nonumber
\end{eqnarray}
if, in the last equality, we take
\begin{eqnarray}
\hat{\gamma}=\frac{3K_{D}^{2}}{2\gamma}>0, \elabel{thgamma}
\end{eqnarray}
where $M_{\delta}(t)$ is a martingale of the following form,
\begin{eqnarray}
&&2\sum_{j=1}^{d}\int_{t}^{T}\left(\Delta
V^{(1)}_{(l),\delta}(s^{-},x)\right)'\left(\Delta({\cal
J}_{j})_{(l),\delta}^{(1)}(s^{-},x,U)
-\Delta(\bar{V}_{j})_{(l),\delta}^{(1)}(s^{-},x)\right)e^{2\gamma
s}dW_{j}(s)
\nonumber\\
&&-2\sum_{j=1}^{h}\int_{t}^{T} \int_{z_{j}>0}\left(\Delta
V^{(1)}_{(l),\delta}(s^{-},x)\right)'
\left((1/\delta)\tilde{V}_{j}(s^{-},x,z_{j})
+\tilde{V}^{(1)}_{j}(s^{-},x,z_{j})\right)e^{2\gamma
s}\tilde{N}(\lambda_{j}ds,x,dz_{j})
\nonumber\\
&&+2\sum_{j=1}^{h}\int_{t}^{T} \int_{z_{j}>0}\left(\Delta
V^{(1)}_{(l),\delta}(s^{-},x)\right)'
\left((1/\delta)(\tilde{V}_{j})_{(l),\delta}(s^{-},x,z_{j})\right)
e^{2\gamma s}(\tilde{N}_{j})_{(l),\delta}(\lambda_{j}ds,x,dz_{j}).
\nonumber
\end{eqnarray}
Now, it follows from Lemma 1.3 in pages 6-7 of Peskir and
Shiryaev~\cite{pesshi:optsto} that, for each $t\in[0,T]$ and
$\sigma>0$, there is a sequence of
$\{\delta_{n},n=1,2,...\}\subset[0,\sigma]$ such that
\begin{eqnarray}
&&E\left[ess\sup_{0\leq\delta\leq\sigma}\zeta(\Delta
V^{(1)}_{(l),\delta}(t,x))\right] \elabel{difsuco}\\
&=&E\left[ess\sup_{\{\delta_{n}:0\leq\delta_{n}\leq\sigma,n=1,2,...\}}
\zeta(\Delta V^{(1)}_{(l),\delta_{n}}(t,x))\right]\nonumber\\
&=&\lim_{n\rightarrow\infty}E\left[\zeta(\Delta V^{(1)}_{(l),
\delta_{n}}(t,x))\right]\nonumber\\
&\leq& \hat{\gamma}\lim_{n\rightarrow\infty}
E\left[\int_{t}^{T}\left\|\Delta{\cal
L}^{(1)}_{(l),\delta_{n}}(s,x,U)\right\|^{2}e^{2\gamma
s}ds\right]-\lim_{n\rightarrow\infty}E\left[M_{\delta_{n}}(t)\right]
\nonumber\\
&\leq&
\hat{\gamma}E\left[\int_{t}^{T}ess\sup_{0\leq\delta\leq\sigma}
\left\|\Delta{\cal
L}^{(1)}_{(l),\delta}(s,x,U)\right\|^{2}e^{2\gamma s}ds\right],
\nonumber
\end{eqnarray}
where ``esssup'' denotes the essential supremum and the first
inequality in \eq{difsuco} follows from \eq{pvitod}. So, by the
Lebesgue's dominated convergence theorem, we have
\begin{eqnarray}
&&\lim_{\sigma\rightarrow
0}E\left[ess\sup_{0\leq\delta\leq\sigma}\zeta(\Delta
V^{(1)}_{(l),\delta}(t,x))\right] \elabel{difsucI}\\
&\leq& \hat{\gamma}E\left[\int_{t}^{T}\lim_{\sigma\rightarrow
0}ess\sup_{0\leq\delta\leq\sigma} \left\|\Delta{\cal
L}^{(1)}_{(l),\delta}(s,x,U)\right\|^{2}e^{2\gamma s}ds\right],
\nonumber
\end{eqnarray}
since, due to the mean-value theorem and the conditions stated in
\eq{blipschitzo}, we have
\begin{eqnarray}
&&\left\|\Delta{\cal L}^{(1)}_{(l),\delta}(t,x,U)\right\| \leq
2K_{D}\left(\|U\|_{C^{k+1}(D,q)}+\|\bar{U}\|_{C^{m+1}(D,qd)}
+\left\|\tilde{U}\right\|_{\nu,1}\right). \nonumber
\end{eqnarray}
Then, by \eq{difsucI} and the Fatou's lemma, we know that, for any
sequence $\sigma_{n}$ satisfying $\sigma_{n}\rightarrow 0$ along
$n\in{\cal N}$, there is a subsequence ${\cal N}'\subset{\cal N}$
such that
\begin{eqnarray}
&&ess\sup_{0\leq\delta\leq\sigma_{n}}\zeta(\Delta
V^{(1)}_{(l),\delta}(t,x))\rightarrow 0\;\;\mbox{along}\;\;n\in{\cal
N}'\;\;\mbox{a.s.}, \elabel{zetazero}
\end{eqnarray}
which implies that the first-order derivative of $V$ in terms of
$x_{l}$ for each $l\in\{1,...,p\}$ exists and equals
$V_{(l)}^{(1)}(t,x)$ a.s. for each $t\in[0,T]$ and $x\in D$, and
moreover, it is $\{{\cal F}_{t}\}$-adapted. Next, it follows from
the similar proof as used in \eq{difsuco} that
\begin{eqnarray}
&&\lim_{\sigma\rightarrow
0}E\left[\int_{t}^{T}ess\sup_{0\leq\delta\leq\sigma}
\mbox{Tr}\left(\Delta{\cal J}^{(1)}_{(l),\delta}(s,x,U)
-\Delta\bar{V}^{(1)}_{(l),\delta}(s,x) \right)e^{2\gamma s}ds\right]
\elabel{difsucII}\\
&\leq& \hat{\gamma}E\left[\int_{t}^{T}\lim_{\sigma\rightarrow
0}ess\sup_{0\leq\delta\leq\sigma} \left\|\Delta{\cal
L}^{(1)}_{(l),\delta}(s,x,U)\right\|^{2}e^{2\gamma s}ds\right].
\nonumber
\end{eqnarray}
Thus, by \eq{zetazero} and \eq{difsucII}, we know that
\begin{eqnarray}
&&\lim_{\delta\rightarrow
0}\Delta\bar{V}^{(1)}_{(l),\delta}(t,x)=\lim_{\delta\rightarrow
0}\Delta{\cal J}^{(1)}_{(l),\delta}(t,x,U)=0\;\;\;\mbox{a.s.}
\nonumber
\end{eqnarray}
which implies that the first-order derivative of $\bar{V}$ with
respect to $x_{l}$ for each $l\in\{1,...,p\}$ exists and equals
$\bar{V}^{(1)}_{(l)}(t,x)$ a.s. for every $t\in[0,T]$ and $x\in D$,
and moreover, it is a $\{{\cal F}_{t}\}$-predictable process.
Similarly, we can get the conclusion for
$\tilde{V}_{(l)}^{(1)}(t,x,z)$ associated with each $l,t,x,z$.

Secondly, assuming that $(V^{(c-1)}(t,x),$ $\bar{V}^{(c-1)}(t,x),$
$\tilde{V}^{(c-1)}(t,x,z))$ corresponding to a given $(U(t,x),$
$\bar{U}(t,x),$ $\tilde{U}(t,x,z))$ $\in{\cal Q}^{2}_{{\cal
F}}([0,T])$ exists for any given $c\in\{1,2,...\}$. Then we can show
that
\begin{eqnarray}
&&\left(V^{(c)}(t,x), \bar{V}^{(c)}(t,x),
\tilde{V}^{(c)}(t,x,z)\right) \elabel{cthderivative}
\end{eqnarray}
exists for the given $c\in\{1,2,...\}$.

As a matter of fact, consider any fixed nonnegative integer numbers
$i_{1},...,i_{p}$ satisfying $i_{1}+...+i_{p}=c-1$ for the given
$c\in\{1,2,...\}$, each $f\in\{V,\bar{V}, \tilde{V}\}$, each
$l\in\{1,...,p\}$, and each small enough $\delta>0$,  let
\begin{eqnarray}
&&f_{(i_{1}...(i_{l}+1)...i_{p}),\delta}^{(c-1)}(t,x)\equiv
f_{(i_{1}...i_{p})}^{(c-1)}(t,x+\delta e_{l})\elabel{fdeltan}
\end{eqnarray}
correspond to ${\cal I}^{(c-1)}_{(i_{1}...i_{p})}(s,x+\delta
e_{l},U(s,x+\delta e_{l}),\cdot)$ with ${\cal I}\in\{{\cal L},{\cal
J}\}$ via \eq{sigmanV}, where the differential operators ${\cal L}$
and ${\cal J}$ are replaced by their $(c-1)$th-order partial
derivatives ${\cal L}^{(c-1)}_{(i_{1}...i_{p})}$ and ${\cal
J}^{(c-1)}_{(i_{1}...i_{p})}$. Similarly, let
$(V^{(c)}_{(i_{1}...(i_{l}+1)...i_{p})}(t,x),$
$\bar{V}^{(c)}_{(i_{1}...(i_{l}+1)...i_{p})}(t,x),$
$\tilde{V}^{(c)}_{(i_{1}...(i_{l}+1)...i_{p})}(t,x,z))$ be defined
through \eq{sigmanV} where ${\cal L}$ and ${\cal J}$ are replaced by
their $c$th-order partial derivatives ${\cal
L}^{(c)}_{i_{1}...(i_{l}+1)...i_{p}}$ and ${\cal
J}^{(c)}_{i_{1}...(i_{l}+1)...i_{p}}$ corresponding to a given $t,x,
U(t,x),$ $\bar{U}(t,x),$ $\tilde{U}(t,x,z)$. Moreover, define
\begin{eqnarray}
&&\Delta f^{(c)}_{(i_{1}...(i_{l}+1)...i_{p}),\delta}(t,x)
=\frac{f^{(c-1)}_{(i_{1}...(i_{l}+1)...i_{p}),\delta}(t,x)-f(t,x)}{\delta}
-f^{(c)}_{(i_{1}...(i_{l}+1)...i_{p})}(t,x) \elabel{ipartialII}
\end{eqnarray}
for each $f\in\{U,\bar{U},\tilde{U},V,\bar{V},\tilde{V}\}$ and let
\begin{eqnarray}
&&\Delta{\cal I}^{(c)}_{(i_{1}...(i_{l}+1)...i_{p}),\delta}(t,x,U)
\elabel{ideltasqr}\\
&&=\frac{1}{\delta}\left({\cal
I}^{(c-1)}_{(i_{1}...i_{p})}(t,x+\delta
e_{l},U(t,x+\delta e_{l}),\cdot)\right. \nonumber\\
&&\;\;\;\left.-{\cal
I}^{(c-1)}_{(i_{1}...i_{p})}(s,x,U(s,x),\cdot)\right)-{\cal
I}^{(c)}_{(i_{1}...(i_{l}+1)...i_{p})}(s,x,U(s,x),\cdot)
\nonumber
\end{eqnarray}
for ${\cal I}\in\{{\cal L},{\cal J}\}$. Then, by the It$\hat{o}$'s
formula and repeating the procedure as used in the second step, we
know that $(V^{(c)}_{(i_{1}...(i_{l}+1)...i_{p})}(t,x),$
$\bar{V}^{(c)}_{(i_{1}...(i_{l}+1)...i_{p})}(t,x),$
$\tilde{V}^{(c)}_{(i_{1}...(i_{l}+1)...i_{p})}(t,x,z))$ exist for
the given $c\in\{1,2,...\}$ and all $l\in\{1,...,p\}$, which implies
that the claim in \eq{cthderivative} is true.

Thirdly, it follows from the induction method with respect to
$c\in\{1,2,...\}$ that the claims stated in the lemma are true.
Hence we finish the proof of Lemma~\ref{differentiableV}. $\Box$
\end{proof}

\begin{lemma}\label{lemmathree}
Under the conditions of Theorem~\ref{bsdeyI}, all the claims in the
theorem are true.
\end{lemma}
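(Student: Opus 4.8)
The plan is to recast the assertion of Theorem~\ref{bsdeyI} as a fixed-point problem on the complete weighted space ${\cal Q}^2_{\cal F}([0,T])$ of \eq{bunisoI} and to solve it by a contraction argument. By Lemma~\ref{martindecom}, to each input triplet $(U,\bar U,\tilde U)\in{\cal Q}^2_{\cal F}([0,T])$ one associates, through the martingale representation applied to the conditional expectation \eq{sigmanIII}, an output triplet $(V,\bar V,\tilde V)$ solving \eq{sigmanV}; call this assignment $\Phi$. First I would check that $\Phi$ maps ${\cal Q}^2_{\cal F}([0,T])$ into itself. The membership ${\cal L}(\cdot,x,U,\cdot),{\cal J}(\cdot,x,U,\cdot)\in L^2_{\cal F}$ needed to run the representation follows from the Lipschitz/growth bounds together with the base condition \eq{blipic}, as recorded in \eq{ladaptedI}. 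Lemma~\ref{differentiableV} then guarantees that every derivative $(V^{(c)},\bar V^{(c)},\tilde V^{(c)})$ exists, is adapted/predictable, and solves the differentiated equation \eq{pdsigmanV}; combining this with the linear-growth bounds in \eq{blipschitz}--\eq{blipschitzI} and the martingale isometry yields, for each order $c$, a bound on $E\int_0^T\|V^{(c)}(t,x)\|^2dt$ (and likewise for $\bar V,\tilde V$) in terms of the $C^{k+c}$- and $C^{m+c}$-norms of the data. Summing these against the geometric weights $\xi(c)=e^{-c}$ of \eq{inftynorm} keeps the total finite: the derivative-order shift $c\mapsto k+c$ merely re-indexes the series at the cost of a fixed factor $e^{k}$, so the output remains in the $C^\infty$-valued space. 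This is precisely the device that reconciles the inconsistency of differential orders on the two sides of \eq{bspdef}.

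Next I would establish the contraction. Given two inputs $(U^{1},\bar U^{1},\tilde U^{1})$ and $(U^{2},\bar U^{2},\tilde U^{2})$ with images $(V^{j},\bar V^{j},\tilde V^{j})=\Phi(U^{j},\bar U^{j},\tilde U^{j})$, I would apply the It\^o formula to $\mbox{Tr}(V^{1}-V^{2})^{(c)}e^{2\gamma s}$ order by order, exactly as in the proof of Lemma~\ref{differentiableV}, but now invoking the Lipschitz conditions \eq{blipschitz}--\eq{blipschitzI} to control $\|\Delta{\cal L}^{(c)}\|$ and $\|\Delta{\cal J}^{(c)}\|$ by $C^{k+c}$- and $C^{m+c}$-norms of the differences $U^{1}-U^{2}$, $\bar U^{1}-\bar U^{2}$, $\tilde U^{1}-\tilde U^{2}$. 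The cross term is split via $2ab\le \hat\gamma^{-1}a^{2}+\hat\gamma b^{2}$, and $\gamma$ is taken large (with $\hat\gamma$ tuned as in \eq{thgamma}) so that the coefficient $-2\gamma+\mbox{const}\,K_D^{2}/\hat\gamma$ of the trace term is strictly negative. Summing the per-order inequalities against $e^{-c}$ and absorbing the $e^{k}$ factor from the order shift, one arrives at $\|\Phi(U^{1},\cdot)-\Phi(U^{2},\cdot)\|^{2}\le\rho\,\|(U^{1},\cdot)-(U^{2},\cdot)\|^{2}$ in the $\gamma$-weighted norm on ${\cal Q}^2_{\cal F}([0,T])$, with $\rho<1$ once $\gamma$ is large enough.

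With $\Phi$ a contraction on the complete space ${\cal Q}^2_{\cal F}([0,T])$, the Banach fixed-point theorem furnishes a unique fixed point $(V,\bar V,\tilde V)$. By construction it satisfies \eq{bspdef} (solving \eq{sigmanV} with $U=V$ is exactly \eq{bspdef}), $V$ is c\`adl\`ag, and the triplet lies in ${\cal Q}^2_{\cal F}([0,T])$ as required by \eq{buniso}. Uniqueness in the sense stated in the theorem is read off the same estimate: any second solution $(U,\bar U,\tilde U)$ is also a fixed point, so $\|V-U\|=\|\Phi(V,\cdot)-\Phi(U,\cdot)\|\le\rho\|V-U\|$ with $\rho<1$ forces their ${\cal Q}$-distance to vanish.

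The main obstacle I anticipate is carrying out the self-mapping and contraction estimates in the $C^\infty$-valued norm, where ${\cal L}$ and ${\cal J}$ probe higher derivatives ($C^{k+c}$, $C^{m+c}$) than the equation controls on the left-hand side ($C^{c}$), and where ${\cal J}$, itself carrying derivatives, appears inside the diffusion coefficient. What makes the argument close is the specific geometric weight $\xi(c)=e^{-c}$: it renders the formally derivative-losing Lipschitz and growth bounds summable, the order shift producing only the harmless constant $e^{k}$, so that the order-by-order It\^o estimates provided by Lemma~\ref{differentiableV} can be assembled into a single fixed-point inequality on ${\cal Q}^2_{\cal F}([0,T])$.
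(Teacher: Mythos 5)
Your proposal follows essentially the same route as the paper: the map $(U,\bar U,\tilde U)\mapsto(V,\bar V,\tilde V)$ from Lemma~\ref{martindecom}, order-by-order It\^o estimates with the weight $e^{2\gamma t}$ and the tuning \eq{thgamma}, the Lipschitz bounds \eq{blipschitz}--\eq{blipschitzI} to control $\Delta{\cal L}^{(c)}$ and $\Delta{\cal J}^{(c)}$, and summation against $\xi(c)=e^{-c}$ to absorb the derivative-order shift $c\mapsto k+c$ into a fixed factor, exactly as in \eq{clastine}--\eq{infinityine}. The only cosmetic difference is that the paper runs a Picard iteration and obtains a two-step recursive inequality \eq{noninfinityine} (the extra index arises from estimating $\Delta\bar U^{i}$ through $\Delta{\cal J}(s,x,U^{i},U^{i-1})$ in \eq{secondineu}) whose increments it sums into a Cauchy sequence, rather than invoking the Banach fixed-point theorem with a one-step contraction constant $\rho<1$; both devices deliver the same conclusion.
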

\begin{proof}
Let $D_{{\cal F}}^{2}([0,T],R^{q})$ be the set of $R^{q}$-valued
$\{{\cal F}_{t}\}$-adapted and square integrable c\`adl\`ag
processes as in \eq{adaptednormI}. Moreover, for any given
$\gamma\in R$, define ${\cal M}^{D}_{\gamma}[0,T]$ to be the
following Banach space (see, e.g., the similar explanation as used
in Yong and Zhou~\cite{yonzho:stocon}, and Situ~\cite{sit:solbac})
\begin{eqnarray}
&&\;\;\;{\cal M}^{D}_{\gamma}[0,T]=D^{2}_{{\cal
F}}([0,T],R^{q})\times L^{2}_{{\cal F},p}([0,T],R^{q\times d})\times
L^{2}_{p}([0,T],R^{q\times h}) \elabel{combanach}
\end{eqnarray}
endowed with the norm: for any given $(U,\bar{U},\tilde{U})\in{\cal
M}^{D}_{\gamma}[0,T]$,
\begin{eqnarray}
\;\;\;\left\|(U,\bar{U},\tilde{U})\right\|^{2}_{{\cal
M}^{D}_{\gamma}} &\equiv&
\sum_{k=1}^{\infty}\xi(k)\left\|(U,\bar{U},\tilde{U})
\right\|^{2}_{{\cal M}^{D}_{\gamma,k}}, \elabel{comnorm}
\end{eqnarray}
where, without loss of generality, we assume that $m=k$ in
\eq{bspdef} and
\begin{eqnarray}
\;\;\;\;\left\|(U,\bar{U},\tilde{U})\right\|^{2}_{{\cal
M}^{D}_{\gamma,k}}&=&E\left[\sup_{0\leq t\leq
T}\left\|U(t)\right\|^{2}_{C^{k}(D,q)}e^{2\gamma t}\right]
+E\left[\int_{0}^{T}
\left\|\bar{U}(t)\right\|^{2}_{C^{k}(D,qd)}e^{2\gamma
t}dt\right]\elabel{kcomnorm}\\
&&+E\left[\int_{0}^{T}\left\|\tilde{U}(t)
\right\|^{2}_{\nu,k}e^{2\gamma t}dt\right]. \nonumber
\end{eqnarray}
In addition, through \eq{sigmanV}, we can define the following map,
\begin{eqnarray}
&&\Xi:(U(\cdot,x),\bar{U}(\cdot,x),\tilde{U}(\cdot,x,z))
\rightarrow(V(\cdot,x),\bar{V}(\cdot,x),
\tilde{V}(\cdot,x,z)).\nonumber
\end{eqnarray}
Then, based on the norm defined in \eq{comnorm}, we can show that
$\Xi$ forms a contraction mapping in ${\cal M}^{D}_{\gamma}[0,T]$.
As a matter of fact, consider $(U^{i}(\cdot,x),\bar{U}^{i}(\cdot,x),
\tilde{U}^{i}(\cdot,x,z))\in{\cal M}^{D}_{\gamma}[0,T]$ and
$(V^{i}(\cdot,x),$ $\bar{V}^{i}(\cdot,x),$
$\tilde{V}^{i}(\cdot,x,z))$ $=\Xi(U^{i}(\cdot,x)$,
$\bar{U}^{i}(\cdot,x), \tilde{U}^{i}(\cdot,x,z))$ with
$i\in\{1,2,...\}$, define
\begin{eqnarray}
&&\Delta f^{i}=f^{i+1}-f^{i}\;\;\;\mbox{with}\;\;\;
f\in\left\{U,\bar{U},\tilde{U},V,\bar{V},\tilde{V}\right\}\nonumber
\end{eqnarray}
and take
\begin{eqnarray}
&&\zeta(\Delta U^{i}(t,x))=\mbox{Tr}\left(\Delta
U^{i}(t,x)\right)e^{2\gamma t}. \elabel{firstzeta}
\end{eqnarray}
Then it follows from \eq{blipschitz} and the similar argument as
used in proving \eq{pvitod} that, for a $\gamma>0$ and each
$i\in\{2,3,...\}$,
\begin{eqnarray}
&&\zeta(\Delta U^{i}(t,x))+\int_{t}^{T}\mbox{Tr}\left(\Delta{\cal
J}(s,x,U^{i},U^{i-1}) -\Delta\bar{U}^{i}(s,x) \right)e^{2\gamma s}ds
\elabel{eupvitod}\\
&&+\int_{t}^{T}\int_{z>0}
\mbox{Tr}\left(\Delta\tilde{U}^{i}(s^{-},x,z) \right)e^{2\gamma
s}\tilde{N}(\lambda ds,x,dz)
\nonumber\\
&&\leq\hat{\gamma}\int_{t}^{T}\left\|\Delta{\cal
L}(s,x,U^{i},U^{i-1})\right\|^{2}e^{2\gamma s}ds-M_{(i)}(t)
\nonumber\\
&&\leq\hat{\gamma}K_{a}N^{i-1}(t)-M^{i}(t) \nonumber
\end{eqnarray}
where $K_{a}$ is some nonnegative constant depending only on
$K_{D}$, and for the last inequality, we have taken
\begin{eqnarray}
&&\hat{\gamma}=\frac{3K_{D}^{2}}{2\gamma}>0. \elabel{euthgamma}
\end{eqnarray}
Moreover, $N^{i-1}(t)$ appeared in \eq{eupvitod} is given by
\begin{eqnarray}
&&\;N^{i-1}(t)=\int_{t}^{T}\left(\left\|\Delta
U^{i-1}(s)\right\|^{2}_{C^{k}(D,q)}+\left\|\Delta\bar{U}^{i-1}(s)
\right\|^{2}_{C^{k}(D,qd)}+\left\|\Delta\tilde{U}^{i-1}(s)
\right\|^{2}_{\nu,k}\right)e^{2\gamma s}ds \elabel{euntt}
\end{eqnarray}
and $M^{i}(t)$ is a martingale of the following form,
\begin{eqnarray}
&&M^{i}(t)\elabel{eumsigman}\\
&=&-2\sum_{j=1}^{d}\int_{t}^{T}\left((\Delta
U^{i})(s^{-},x)\right)'\left(\Delta{\cal
J}_{j}(s^{-},x,U^{i},U^{i-1})
-(\Delta\bar{U}^{i})_{j}(s^{-},x)\right)
e^{2\gamma s}dW_{j}(s) \nonumber\\
&&+2\sum_{j=1}^{h}\int_{t}^{T} \int_{z_{j}>0}\left((\Delta
U^{i})_{j}(s^{-},x)\right)'
\left((\tilde{U}^{i})_{j}(s^{-},x,z_{j})\right) e^{2\gamma
s}\tilde{N}_{j}(\lambda_{j}ds,x,dz_{j}). \nonumber
\end{eqnarray}
Then, by \eq{eupvitod}-\eq{eumsigman} and the martingale properties
related to stochastic integral, we have
\begin{eqnarray}
&&E\left[\left\|\Delta U^{i}(t,x) \right\|^{2}e^{2\gamma
t}+\int_{t}^{T}\mbox{Tr}\left(\Delta{\cal J}(s,x,U^{i},U^{i-1})
-\Delta\bar{U}^{i}(s,x)
\right)e^{2\gamma s}ds\right. \elabel{vitodI}\\
&&\;\;\;\;\;\left.+\int_{t}^{T}\left\|\Delta\tilde{U}^{i}(s)
\right\|_{\nu,0}^{2}e^{2\gamma
s}ds\right]\nonumber\\
&&\leq\hat{\gamma}(T+1)K_{a}\left\|(\Delta
U^{i-1},\Delta\bar{U}^{i-1},\Delta\tilde{U}^{i-1})
\right\|^{2}_{{\cal M}^{D}_{\gamma,k}}. \nonumber
\end{eqnarray}

Next it follows from \eq{eumsigman}, the Burkholder-Davis-Gundy's
inequality (see, e.g., Theorem 48 in page 193 of
Protter~\cite{pro:stoint}) that
\begin{eqnarray}
&&\;\;\;E\left[\sup_{0\leq t\leq T}\left|M^{i}(t)\right|\right]
\elabel{vitodII}\\
&\leq&4\sum_{j=1}^{d}E\left[\sup_{0\leq t\leq
T}\left|\int_{0}^{t}\left(\Delta
U^{i}(s^{-},x)\right)'\right.\right.
\nonumber\\
&&\;\;\;\;\;\;\;\;\;\;\;\;\left.\left.\left(\Delta{\cal
J}_{j}(s^{-},x,U^{i},U^{i-1})-(\Delta\bar{U}^{i})_{j}
(s^{-},x)\right)e^{2\gamma s}dW_{j}(s)\right|\right]
\nonumber\\
&&+4\sum_{j=1}^{h}E\left[\sup_{0\leq t\leq T}\left|\int_{0}^{t}
\int_{z_{j}>0}\left(\Delta U^{i}(s^{-},x)\right)'
(\Delta\tilde{U}^{i})_{j}(s^{-},x,z_{j}) e^{2\gamma
s}\tilde{N}(\lambda_{j}ds,x,dz_{j})\right|\right]
\nonumber\\
&\leq&K_{b}\sum_{j=1}^{d}E\left[\left(\int_{0}^{T}\left\|\Delta
U^{i}(s,x)\right\|^{2}\left\|(\Delta{\cal
J}^{i})_{j}(s,x,U^{i},U^{i-1})-(\Delta\bar{U}^{i})_{j}
(s,x)\right\|^{2}e^{4\gamma s}ds\right)^{\frac{1}{2}}
\right]\nonumber\\
&&+K_{b}\sum_{j=1}^{h}E\left[\left(\int_{0}^{T}
\int_{z_{j}>0}\left\|\Delta U^{i}(s,x)\right\|^{2}
\left\|(\Delta\tilde{U}^{i})_{j}(s,x,z_{j})\right\|^{2} e^{4\gamma
s}\lambda_{j}\nu_{j}(x,dz_{j})ds)\right)^{\frac{1}{2}}
\right] \nonumber\\
&\leq&K_{b}E\left[\left(\sup_{0\leq t\leq T}\|\Delta
U^{i}(t,x)\|^{2}e^{2\gamma t}\right)^{\frac{1}{2}}\right.
\nonumber\\
&&\;\;\;\;\;\;\;\;\;\;\left(\sum_{j=1}^{d}\left(\int_{0}^{T}
\left\|\Delta{\cal
J}_{j}(s,x,U^{i},U^{i-1})-(\Delta\bar{U}^{i})_{j}(s,x)
\right\|^{2}e^{2\gamma s}ds\right)^{\frac{1}{2}}\right.
\nonumber\\
&&\;\;\;\;\;\;\;\;\left.\left.+\sum_{j=1}^{h}\left(\int_{0}^{T}
\int_{z_{j}>0}\left\|(\Delta\tilde{U}^{i})_{j}(s,x,z_{j})\right\|^{2}
e^{2\gamma s}\lambda_{j}\nu_{j}(dz_{j})ds)\right)^{\frac{1}{2}}
\right)\right] \nonumber\\
&\leq&\frac{1}{2}E\left[\sup_{0\leq t\leq T}\|\Delta
U^{i}(t,x)\|^{2}e^{2\gamma t}\right]\nonumber\\
&&+dK_{b}^{2}E\left[\left(\int_{0}^{T}\left\|\Delta{\cal
J}_{j}(s,x,U^{i},U^{i-1})-(\Delta\bar{U}^{i})_{j}(s,x)
\right\|^{2}e^{2\gamma s}ds\right)\right]\nonumber\\
&&+K_{b}^{2}E\left[\int_{0}^{T}
\left\|\Delta\tilde{U}^{i}(s)\right\|^{2}_{\nu,0}
e^{2\gamma s}ds\right] \nonumber\\
&\leq&\frac{1}{2}E\left[\sup_{0\leq t\leq T}\|\Delta
U^{i}(t,x)\|^{2}_{C^{0}(q)}e^{2\gamma
t}\right]+\hat{\gamma}(T+1)dK_{b}^{2}E\left[N^{i-1}(t)\right]
\nonumber
\end{eqnarray}
where $K_{b}$ is some nonnegative constant depending only on $K_{D}$
and $T$, and we have used \eq{vitodI} for the last inequality of
\eq{vitodII}. Thus it follows from \eq{eupvitod}-\eq{vitodII} that
\begin{eqnarray}
&&E\left[\sup_{0\leq t\leq T}\left\|\Delta
U^{i}(t)\right\|^{2}_{C^{0}(q)}e^{2\gamma
t}\right]\elabel{firstineu}\\
&&\leq 2\left(1+dK_{b}^{2}\right)\hat{\gamma}(T+1)\left\|(\Delta
U^{i-1},\Delta\bar{U}^{i-1},\Delta\tilde{U}^{i-1})
\right\|^{2}_{{\cal M}^{D}_{\gamma,k}}. \nonumber
\end{eqnarray}
Moreover it follows from \eq{eupvitod} and \eq{blipschitzo} that,
for $i\in\{3,4,...\}$,
\begin{eqnarray}
&&E\left[\int_{t}^{T}
\mbox{Tr}\left(\Delta\bar{U}^{i}(s,x)\right)e^{2\gamma s}ds\right]
\elabel{secondineu}\\
&\leq& 2E\left[\int_{t}^{T}\mbox{Tr}\left(\Delta{\cal
J}(s,x,U^{i},U^{i-1}) -\Delta\bar{U}^{i}(s,x)
\right)e^{2\gamma s}ds\right]\nonumber\\
&&+2E\left[\int_{t}^{T}\mbox{Tr}\left(\Delta{\cal
J}(s,x,U^{i},U^{i-1})\right)e^{2\gamma s}ds\right]
\nonumber\\
&\leq&2\hat{\gamma}K_{c}\left(\left\|(\Delta
U^{i-1},\Delta\bar{U}^{i-1},\Delta\tilde{U}^{i-1})
\right\|^{2}_{{\cal M}^{D}_{\gamma,k}}+\left\|(\Delta
U^{i-2},\Delta\bar{U}^{i-2},\Delta\tilde{U}^{i-2})
\right\|^{2}_{{\cal M}^{D}_{\gamma,k}}\right) \nonumber
\end{eqnarray}
where $K_{c}$ is some nonnegative constant depending only on $K_{D}$
and $T$. Thus it follows from \eq{eupvitod} and
\eq{firstineu}-\eq{secondineu} that
\begin{eqnarray}
&&\left\|(\Delta U^{i},\Delta\bar{U}^{i},\Delta\tilde{U}^{i})
\right\|^{2}_{{\cal M}^{D}_{\gamma,0}} \elabel{lastine}\\
&&\leq\hat{\gamma}K_{d}\left(\left\|(\Delta
U^{i-1},\Delta\bar{U}^{i-1},\Delta\tilde{U}^{i-1})
\right\|^{2}_{{\cal M}^{D}_{\gamma,k}}+\left\|(\Delta
U^{i-2},\Delta\bar{U}^{i-2},\Delta\tilde{U}^{i-2})
\right\|^{2}_{{\cal M}^{D}_{\gamma,k}}\right) \nonumber
\end{eqnarray}
where $K_{d}$ is some nonnegative constant depending only on $K_{D}$
and $T$.

Now, by Lemma~\ref{differentiableV} and the similar construction as
in \eq{firstzeta}, for each $c\in\{1,2,...\}$, we can define
\begin{eqnarray}
&&\zeta(\Delta U^{c,i}(t,x))\equiv\mbox{Tr}\left(\Delta
U^{c,i}(t,x)\right)e^{2\gamma t}, \elabel{secondzeta}
\end{eqnarray}
where
\begin{eqnarray}
&&\Delta U^{c,i}(t,x))=(\Delta U^{(0),i}(t,x)),\Delta
U^{(1),i}(t,x)),...,\Delta U^{(c),i}(t,x))'. \nonumber
\end{eqnarray}
Then it follows from the It$\hat{o}$'s formula and the similar
discussion for \eq{lastine} that
\begin{eqnarray}
&&\left\|(\Delta U^{i},\Delta\bar{U}^{i},\Delta\tilde{U}^{i})
\right\|^{2}_{{\cal M}^{D}_{\gamma,c}} \elabel{clastine}\\
&&\leq\hat{\gamma}K_{d}\left(\left\|(\Delta
U^{i-1},\Delta\bar{U}^{i-1},\Delta\tilde{U}^{i-1})
\right\|^{2}_{{\cal M}^{D}_{\gamma,k+c}}+\left\|(\Delta
U^{i-2},\Delta\bar{U}^{i-2},\Delta\tilde{U}^{i-2})
\right\|^{2}_{{\cal M}^{D}_{\gamma,k+c}}\right), \nonumber
\end{eqnarray}
which implies that
\begin{eqnarray}
&&\left\|(\Delta U^{i},\Delta\bar{U}^{i},\Delta\tilde{U}^{i})
\right\|^{2}_{{\cal M}^{D}_{\gamma}} \elabel{infinityine}\\
&&\leq\hat{\gamma}K_{f}\left(\left\|(\Delta
U^{i-1},\Delta\bar{U}^{i-1},\Delta\tilde{U}^{i-1})
\right\|^{2}_{{\cal M}^{D}_{\gamma}}+\left\|(\Delta
U^{i-2},\Delta\bar{U}^{i-2},\Delta\tilde{U}^{i-2})
\right\|^{2}_{{\cal M}^{D}_{\gamma}}\right). \nonumber
\end{eqnarray}
Since $(a^{2}+b^{2})^{1/2}\leq a+b$ for $a,b\geq 0$, we have
\begin{eqnarray}
&&\left\|(\Delta U^{i},\Delta\bar{U}^{i},\Delta\tilde{U}^{i})
\right\|_{{\cal M}^{D}_{\gamma}} \elabel{noninfinityine}\\
&\leq&\sqrt{\hat{\gamma}K_{f}}\left(\left\|(\Delta
U^{i-1},\Delta\bar{U}^{i-1},\Delta\tilde{U}^{i-1})\right\|_{{\cal
M}^{D}_{\gamma}}+\left\|(\Delta
U^{i-2},\Delta\bar{U}^{i-2},\Delta\tilde{U}^{i-2})\right\|_{{\cal
M}^{D}_{\gamma}}\right) \nonumber
\end{eqnarray}
where $K_{f}$ is some nonnegative constant depending only on $K_{D}$
and $T$. Therefore, by taking $\gamma>0$ large enough such that
$2\sqrt{\hat{\gamma}K_{f}}$ sufficiently small and by
\eq{infinityine}, we know that
\begin{eqnarray}
&&\sum_{i=3}^{\infty}\left\|(\Delta
U^{i},\Delta\bar{U}^{i},\Delta\tilde{U}^{i})\right\|_{{\cal
M}^{D}_{\gamma}} \elabel{seriescon}\\
&\leq&\frac{\sqrt{\hat{\gamma}
K_{f}}}{1-2\sqrt{\hat{\gamma}K_{f}}}\left(2\left\|(\Delta
U^{2},\Delta\bar{U}^{2},\Delta\tilde{U}^{2})\right\|_{{\cal
M}^{D}_{\gamma}}+\left\|(\Delta
U^{1},\Delta\bar{U}^{1},\Delta\tilde{U}^{1})\right\|_{{\cal
M}^{D}_{\gamma}}\right) \nonumber\\
&<&\infty.\nonumber
\end{eqnarray}
Thus, from \eq{seriescon}, we see that
$(U^{i},\bar{U}^{i},\tilde{U}^{i})$ with $i\in\{1,2,...\}$ forms a
Cauchy sequence in ${\cal M}^{D}_{\gamma}[0,T]$, which implies that
there is some $(U,\bar{U},\tilde{U})$ such that
\begin{eqnarray}
&&(U^{i},\bar{U}^{i},\tilde{U}^{i})\rightarrow
(U,\bar{U},\tilde{U})\;\;\mbox{as}\;\;i\rightarrow\infty\;\;
\mbox{in}\;\;{\cal M}^{D}_{\gamma}[0,T]. \elabel{finalcon}
\end{eqnarray}
Finally, by \eq{finalcon} and the similar procedure as used for
Theorem 5.2.1 in pages 68-71 of
$\emptyset$ksendal~\cite{oks:stodif}, we can finish the proof of
Lemma~\ref{lemmathree}. $\Box$
\end{proof}

\vskip 0.25cm \noindent {\bf Proof of Theorem~\ref{bsdeyI}}.

By combining Lemma~\ref{martindecom}-Lemma~\ref{lemmathree}, we can
reach a proof for Theorem~\ref{bsdeyI}. $\Box$

\section{B-SPDEs in Infinite Space Domain and under Random
Environment}\label{infran}

\subsection{B-SPDEs in the Infinite Space Domain}

First of all, for a given nonnegative integer $b$ and each
$n\in\{b+1,b+2,...\}$, define the following sequence of sets
\begin{eqnarray}
&& D_{n}=\left\{x\in R^{p}: b\leq\|x\|\leq n\right\}\elabel{dnset}
\end{eqnarray}
and let
\begin{eqnarray}
&&R_{b}^{p}=\left\{x\in R^{p}: \|x\|\geq b\right\}\elabel{infset}
\end{eqnarray}
Moreover, let $C^{\infty}(R_{b}^{p},q)$ be the Banach space endowed
with the following norm
\begin{eqnarray}
&&\|f\|_{C^{\infty}(R_{b}^{p},q)}\equiv\sum_{n=b+1}^{\infty}\xi(n+1)
\|f\|_{C^{\infty}(D_{n},q)} \elabel{irbspacen}
\end{eqnarray}
for each $f\in C^{\infty}(R_{b}^{p},q)$, and let
\begin{eqnarray}
&&\bar{{\cal Q}}^{2}_{{\cal F}}([0,T])\equiv \bar{L}^{2}_{{\cal
F}}([0,T],R^{q})\times \bar{L}^{2}_{{\cal F},p}([0,T],R^{qd})\times
\bar{L}^{2}_{p}([0,T],R^{q\times h}) \elabel{ibunisoI}
\end{eqnarray}
be the corresponding space defined in \eq{bunisoI} when the norm in
\eq{inftynorm} is replaced by the associated one given in
\eq{irbspacen}.
\begin{theorem}\label{infdn}
Assuming that there exists a nonnegative constant $K_{R_{b}^{p}}$
such that conditions \eq{blipschitz}-\eq{blipschitzI} are satisfied
when $K_{D}$ is replaced by $K_{R_{b}^{p}}$. Moreover, if ${\cal
L}(t,x,v,\cdot)$ and ${\cal J}(t,x,v,\cdot)$ are $\{{\cal
F}_{t}\}$-adapted for each $x\in R_{b}^{p}$, $z\in R_{+}^{h}$, and
any given $(v,\bar{v},\tilde{v})\in
C^{\infty}(R_{b}^{p},R^{q})\times C^{\infty}(R_{b}^{p},R^{q\times
d})\times \bar{L}^{2}_{\nu,\infty}(R_{b}^{p}\times
R^{h}_{+},R^{q\times h})$ with
\begin{eqnarray}
&&{\cal L}(\cdot,x,0,\cdot),{\cal J}(\cdot,x,0,\cdot)\in
\bar{L}^{2}_{{\cal F}}\left([0,T],R^{q}\right), \elabel{irblipic}
\end{eqnarray}
then the B-SPDE \eq{bspdef} has a unique andapted solution
satisfying,
\begin{eqnarray}
&&(V(\cdot,x),\bar{V}(\cdot,x),\tilde{V}(\cdot,x,z))\in\bar{{\cal
Q}}^{2}_{{\cal F}}([0,T]) \elabel{irbuniso}
\end{eqnarray}
where $V$ is a c\`adl\`ag process and the uniqueness is in the
sense: if there exists another solution
$(U(t,x),\bar{U}(t,x),\tilde{U}(t,x,z))$ as required, we have
\begin{eqnarray}
&&E\left[\int_{0}^{T}\left(\|U(t)-V(t)\|^{2}_{C^{\infty}
(R_{b}^{p},q)}+\|\bar{U}(t)-\bar{V}(t)\|^{2}_{C^{\infty}
(R_{b}^{p},qd)}+\|\tilde{U}(t)
-\tilde{V}(t)\|_{\nu,\infty}^{2}\right)dt\right]=0.
\elabel{iuniquesbI}
\end{eqnarray}
\end{theorem}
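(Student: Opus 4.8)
The plan is to reduce \Theorem{infdn} to the already-proved \Theorem{bsdeyI} by solving the B-SPDE on each bounded domain $D_n$ of \eq{dnset} and then patching the pieces into a global solution on $R_b^p$. First I would observe that for every $n\in\{b+1,b+2,\dots\}$ the hypotheses of \Theorem{bsdeyI} hold verbatim on $D_n$ with the uniform constant $K_{D_n}=K_{R_b^p}$, so \Theorem{bsdeyI} furnishes a unique triplet $(V^{(n)},\bar V^{(n)},\tilde V^{(n)})\in{\cal Q}^2_{\cal F}([0,T])$ solving \eq{bspdef} on $D_n$. Since $D_n\subset D_{n+1}$, since ${\cal L}$ and ${\cal J}$ are local differential operators in $x$, and since the $D_{n+1}$-solution is $C^\infty$ across the sphere $\|x\|=n$ (so one-sided and two-sided derivatives agree there), the restriction $(V^{(n+1)},\bar V^{(n+1)},\tilde V^{(n+1)})|_{D_n}$ again solves \eq{bspdef} on $D_n$; uniqueness on $D_n$ then forces it to coincide with $(V^{(n)},\bar V^{(n)},\tilde V^{(n)})$. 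This compatibility across the nested family $\{D_n\}$ lets me define a single triplet $(V,\bar V,\tilde V)$ on $R_b^p$ whose restriction to each $D_n$ is the $D_n$-solution.

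Next I would verify that $(V,\bar V,\tilde V)\in\bar{\cal Q}^2_{\cal F}([0,T])$, i.e. that the weighted norm built from \eq{irbspacen} is finite. The a priori estimate underlying the contraction argument of \Lemma{lemmathree} (the chain of inequalities \eq{firstineu}--\eq{lastine}, combined with the fixed-point identity $V^{(n)}=\Xi(V^{(n)})$ and the linear-growth bounds \eq{blipschitzo}--\eq{blipschitzI}) yields, with a constant $C$ depending only on $K_{R_b^p}$, $T$ and $\gamma$ and hence \emph{independent of} $n$,
\begin{eqnarray}
&&\left(E\left[\int_0^T\|V^{(n)}(t)\|^2_{C^\infty(D_n,q)}dt\right]\right)^{1/2}\le C\,\Theta_n,\nonumber
\end{eqnarray}
where $\Theta_n$ denotes the finite-domain data norm on $D_n$ assembled from $H$ and from ${\cal L}(\cdot,\cdot,0,\cdot)$, ${\cal J}(\cdot,\cdot,0,\cdot)$, together with the analogous bounds for $\bar V^{(n)}$ and $\tilde V^{(n)}$. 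Because all summands are nonnegative and the $D_n$ are nested, Tonelli's theorem permits interchanging $\sum_n\xi(n+1)$ with $E[\int_0^T\cdot\,dt]$, while Minkowski's inequality in $L^2(\Omega\times[0,T])$ moves the weighted sum outside the square root; combining these with the uniform estimate gives
\begin{eqnarray}
&&\left(E\left[\int_0^T\|V(t)\|^2_{C^\infty(R_b^p,q)}dt\right]\right)^{1/2}\le C\sum_{n=b+1}^\infty\xi(n+1)\,\Theta_n<\infty,\nonumber
\end{eqnarray}
the finiteness holding precisely because $H$ and the data lie in the barred spaces of \eq{irblipic} and because the fast decay $\xi(n+1)=e^{-(n+1)}$ absorbs the possibly growing norms $\Theta_n$. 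The same argument applied to $\bar V$ and $\tilde V$ places the whole triplet in $\bar{\cal Q}^2_{\cal F}([0,T])$.

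Uniqueness then follows from uniqueness on each $D_n$: any competing solution $(U,\bar U,\tilde U)\in\bar{\cal Q}^2_{\cal F}([0,T])$ restricts, by locality of ${\cal L},{\cal J}$, to a $D_n$-solution and hence coincides with $(V^{(n)},\bar V^{(n)},\tilde V^{(n)})$ on $D_n$ for every $n$, so the integrand in \eq{iuniquesbI} vanishes on each $D_n$ and therefore on all of $R_b^p$. I expect the main obstacle to be the second step---securing the a priori bound with a constant genuinely independent of $n$ (this is exactly where the \emph{uniform} Lipschitz and growth constant $K_{R_b^p}$ is indispensable) and then rigorously justifying the interchange of the infinite sum over domains with the expectation and time integral. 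A more self-contained alternative would bypass the patching and instead rerun the contraction-mapping argument of \Lemma{lemmathree} directly in a Banach space ${\cal M}^{R_b^p}_\gamma[0,T]$ normed by $\sum_n\xi(n+1)\|\cdot\|_{{\cal M}^{D_n}_\gamma}$: since every per-$D_n$ contraction estimate holds with the same factor $\sqrt{\hat\gamma K_f}$, summing against $\xi(n+1)$ preserves the contraction and delivers existence and uniqueness in one stroke.
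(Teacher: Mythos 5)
Your primary route---solve on each $D_{n}$ by Theorem~\ref{bsdeyI}, identify the solutions on overlaps by uniqueness, and then verify that the patched triplet lies in $\bar{{\cal Q}}^{2}_{{\cal F}}([0,T])$---has a genuine gap in the last step. The norm \eq{irbspacen} puts the weighted sum \emph{inside} the $L^{2}(\Omega\times[0,T])$ norm: membership in the barred spaces, and in particular the hypothesis \eq{irblipic} on the data, controls $E\bigl[\int_{0}^{T}\bigl(\sum_{n}\xi(n+1)\|\cdot\|_{C^{\infty}(D_{n},q)}\bigr)^{2}dt\bigr]$ and nothing more. Your argument instead needs $\sum_{n}\xi(n+1)\Theta_{n}<\infty$, i.e.\ the weighted sum of the \emph{per-domain $L^{2}$ norms} of the data. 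Minkowski's inequality gives $\|\sum_{n}\xi(n+1)f_{n}\|_{L^{2}}\le\sum_{n}\xi(n+1)\|f_{n}\|_{L^{2}}$ for nonnegative $f_{n}$, which is the wrong direction: finiteness of the left-hand side does not imply finiteness of the right-hand side (take $f_{n}=a_{n}1_{A_{n}}$ with disjoint $A_{n}$ and $a_{n}P(A_{n})^{1/2}=\xi(n+1)^{-1}n^{-1}$; the monotone variant $g_{n}=\sum_{m\le n}f_{m}$ shows the same failure for the actual, nondecreasing quantities $\|H\|_{C^{\infty}(D_{n},q)}$). The fast decay of $\xi$ cannot absorb the growth of $\Theta_{n}$ because that decay is already spent in defining the norm. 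So the chain ``uniform a priori bound on $D_{n}$ $\Rightarrow$ $\sum_{n}\xi(n+1)\Theta_{n}<\infty$ $\Rightarrow$ global integrability'' breaks at the middle implication. The patching and uniqueness portions are otherwise sound: the operators act pointwise in $x$, the $C^{\infty}(D_{n},q)$-norms are monotone in $n$ so restrictions of solutions stay in the $D_{n}$-classes, and the uniqueness conclusion \eq{iuniquesbI} does follow once the global integrability is secured.

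The repair is exactly your closing ``alternative,'' which is in fact the paper's proof: the paper uses \eq{irblipic} to check that the first Picard iterate (started from $(U^{0},\bar{U}^{0},\tilde{U}^{0})=(0,0,0)$) already lies in $\bar{{\cal Q}}^{2}_{{\cal F}}([0,T])$, then observes that the contraction estimate \eq{infinityine} holds over each $D_{n}$ with the single constant coming from $K_{R_{b}^{p}}$, and assembles these into the global estimate \eq{uinfinityine} in the weighted norm $\|\cdot\|_{{\cal M}^{R_{b}^{p}}_{\gamma}}$, after which the Cauchy-sequence argument of Lemma~\ref{lemmathree} repeats verbatim. The essential point that makes this work, and that your main route loses, is that all estimates are carried out on the random variable $\sum_{n}\xi(n+1)\|\cdot\|_{C^{\infty}(D_{n},q)}$ \emph{before} any expectation is taken, so one never has to interchange the infinite sum over domains with the $L^{2}$ norm. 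Had you led with that version, your proposal would coincide with the paper's argument.
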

\begin{proof}
It follows from \eq{irblipic} and the similar argument used for
\eq{infinityine} in the proof of Lemma~\ref{lemmathree} that
\begin{eqnarray}
&&(U^{1}(\cdot,x),\bar{U}^{1}(\cdot,x),\tilde{U}^{1}(\cdot,x,z))
\in\bar{{\cal Q}}^{2}_{{\cal F}}([0,T]) \elabel{onirbuniso}
\end{eqnarray}
with $(U^{0},\bar{U}^{0},\tilde{U}^{0})=(0,0,0)$, where
$(U^{1},\bar{U}^{1},\tilde{U}^{1})$ is defined through \eq{sigmanV}
in Lemma~\ref{martindecom}. Then, over each $\{D_{n}\}$ with
$n\in\{b+1,b+2,...\}$, it follows from \eq{infinityine} in the proof
of Lemma~\ref{lemmathree} that
\begin{eqnarray}
&&\left\|(\Delta U^{i},\Delta\bar{U}^{i},\Delta\tilde{U}^{i})
\right\|^{2}_{{\cal M}^{R_{b}^{p}}_{\gamma}} \elabel{uinfinityine}\\
&&\leq\hat{\gamma}K_{g}\left(\left\|(\Delta
U^{i-1},\Delta\bar{U}^{i-1},\Delta\tilde{U}^{i-1})
\right\|^{2}_{{\cal M}^{R_{b}^{p}}_{\gamma}}+\left\|(\Delta
U^{i-2},\Delta\bar{U}^{i-2},\Delta\tilde{U}^{i-2})
\right\|^{2}_{{\cal M}^{R_{b}^{p}}_{\gamma}}\right) \nonumber
\end{eqnarray}
where $K_{g}$ is some nonnegative constant depending only on $T$ and
$R_{b}^{p}$. Then it follows from \eq{uinfinityine} that the
remaining proof for Theorem~\ref{infdn} can be conducted similarly
as in the proof of Theorem~\ref{bsdeyI}. Hence we finish the proof
of Theorem~\ref{infdn}. $\Box$
\end{proof}

\subsection{B-SPDEs under Random Environment}

Assuming that the random environment under consideration is
characterized by a $R^{p}$-valued Markov process $X(\cdot)$ with
continuous sample paths and its associated stopping time $\tau$ is
defined by
\begin{eqnarray}
&&\tau\equiv\inf\{t\in[0,T],\|X(t)\|<b\}.\elabel{stoptime}
\end{eqnarray}
Then the $q$-dimensional B-SPDEs with jumps under random environment
$X(\cdot)$ can be described as follows,
\begin{eqnarray}
\;\;V(t,X(t))&=&H(X(\tau))+\int_{t}^{\tau}{\cal
L}(s^{-},X(t),V,\cdot)ds
\elabel{rbspdef}\\
&&+\int_{t}^{\tau}\left({\cal J}(s^{-},X(t),V,\cdot)
-\bar{V}(s^{-},X(t)\right)dW(s) \nonumber\\
&&-\int_{t}^{\tau} \int_{z>0}\tilde{V}(s^{-},X(t),z,\cdot)
\tilde{N}(\lambda ds,X(t),dz). \nonumber
\end{eqnarray}
Moreover, define
\begin{eqnarray}
&&\hat{{\cal Q}}^{2}_{{\cal F}}([0,\tau])\equiv \hat{L}^{2}_{{\cal
F}}([0,\tau],R^{q})\times \hat{L}^{2}_{{\cal
F},p}([0,\tau],R^{qd})\times \hat{L}^{2}_{p}([0,\tau],R^{q\times h})
\elabel{rbunisoI}
\end{eqnarray}
be the corresponding space defined in \eq{bunisoI} when the norm in
\eq{inftynorm} is replaced by the following one,
\begin{eqnarray}
&&\|f(x)\|_{\infty}\equiv\sum_{i\in{\cal N}}\xi(i)\sup_{j\leq
i}\left\|f^{(j)}(x)\right\|, \elabel{rbspacen}
\end{eqnarray}
where $f^{(i)}(x)$ for each $x\in R_{b}^{p}$ is defined as in
\eq{difoperator}.
\begin{theorem}\label{thrandom}
Under the conditions as stated in Theorem~\ref{infdn}, the B-SPDE in
\eq{rbspdef} under random environment $X(\cdot)$ has a unique
adapted solution satisfying,
\begin{eqnarray}
&&(V(\cdot,X(\cdot)),\bar{V}(\cdot,X(\cdot)),\tilde{V}(\cdot,X(\cdot),z))
\in\hat{{\cal Q}}^{2}_{{\cal F}}([0,\tau]),
\elabel{rbuniso}
\end{eqnarray}
where $V$ is a c\`adl\`ag process and the uniqueness is in the
sense: if there exists another solution
$(U(t,X(t)),\bar{U}(t,X(t)),\tilde{U}(t,X(t)))$ as required, we have
\begin{eqnarray}
&&E\left[\int_{0}^{T}\left(\|U(t,X(t))-V(t,X(t))\|^{2}_{\infty}+
\|\bar{U}(t,X(t))-\bar{V}(t,X(t))\|^{2}_{\infty}\right.\right.
\nonumber\\
&&\;\;\;\;\;\;\left.\left.+\|\tilde{U}(t,X(t))
-\tilde{V}(t,X(t))\|_{\nu,\infty}^{2}\right)dt\right]=0.
\nonumber
\end{eqnarray}
\end{theorem}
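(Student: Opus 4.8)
The plan is to reduce \Theorem{thrandom} to the fixed-domain result \Theorem{infdn} by composing its solution with the environment path $X(\cdot)$ and then stopping at $\tau$. The crucial structural observation is that, since $X(\cdot)$ has continuous sample paths and $\tau$ is the first hitting time of the open ball $\{\|x\|<b\}$ defined in \eq{stoptime}, we have $X(t)\in R_{b}^{p}$ for every $t\in[0,\tau]$ a.s., with $\|X(\tau)\|=b$ on $\{\tau<\infty\}$ by continuity. Hence the random environment never leaves the infinite space domain $R_{b}^{p}$ on which \Theorem{infdn} already provides a unique adapted triplet $(V(\cdot,x),\bar{V}(\cdot,x),\tilde{V}(\cdot,x,z))\in\bar{\cal Q}^{2}_{\cal F}([0,T])$ solving the B-SPDE \eq{bspdef}, so the terminal datum $H(X(\tau))$ and every coefficient evaluation appearing in \eq{rbspdef} are well defined.

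First I would invoke \Theorem{infdn} to obtain, on $R_{b}^{p}$, the unique solution of \eq{bspdef}, together with the a.s.\ smoothness in $x$ of the whole triplet furnished by \Lemma{differentiableV}. That smoothness is exactly what licenses the composition with $X(\cdot)$: for a.e.\ $\omega$ the maps $x\mapsto V(t,x),\bar{V}(t,x),\tilde{V}(t,x,z)$ are continuous (indeed $C^{\infty}$) on $R_{b}^{p}$, and combined with the c\`adl\`ag dependence on $t$ this yields joint measurability in $(t,x)$, so that $V(t,X(t))$, $\bar{V}(t,X(t))$ and $\tilde{V}(t,X(t),z)$ are bona fide processes. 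Since $X(\cdot)$ is continuous and adapted and $V(\cdot,x)$ is $\{{\cal F}_{t}\}$-adapted and c\`adl\`ag for each $x$, the compositions remain adapted (the barred and tilded ones predictable) and c\`adl\`ag, and their restriction to $[0,\tau]$ satisfies \eq{rbspdef} after applying the optional stopping theorem to pass from terminal time $T$ to the bounded stopping time $\tau$.

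Next I would check that the composed triplet lies in $\hat{\cal Q}^{2}_{\cal F}([0,\tau])$ in the sense of \eq{rbuniso}. Here the point is a norm comparison: for fixed $\omega$ and $t$ with $X(t)\in R_{b}^{p}$, the pointwise norm $\|f(X(t))\|_{\infty}$ of \eq{rbspacen} is dominated by the global norm $\|f\|_{C^{\infty}(R_{b}^{p},q)}$ of \eq{irbspacen}, because evaluating any derivative at the single point $X(t)\in D_{n}$ is bounded by its supremum over $D_{n}$. Integrating over $[0,\tau]\subseteq[0,T]$ and taking expectations then transfers the finiteness of the $\bar{\cal Q}^{2}_{\cal F}([0,T])$ norm guaranteed by \Theorem{infdn} to the required $\hat{\cal Q}^{2}_{\cal F}([0,\tau])$ bound. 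Uniqueness is inherited in the same manner: any competing random-environment solution $(U,\bar{U},\tilde{U})$, read along the paths of $X(\cdot)$, produces a solution of \eq{bspdef} on $R_{b}^{p}$, so the uniqueness clause of \Theorem{infdn} forces the stated a.s.\ equality of the two triplets.

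The hard part will be the rigorous justification of the composition step itself: establishing enough joint regularity of the fixed-domain solution in $(t,x,\omega)$ to make $V(t,X(t))$ a genuine adapted c\`adl\`ag process, and verifying that substituting the random argument $X(t)$ and truncating at $\tau$ truly preserves the It\^o structure of \eq{bspdef} rather than merely plugging in formally. Controlling the interaction of $\tau$ with the Brownian and compensated-Poisson integrals --- via optional stopping and the predictability of the integrands along $X(\cdot)$ --- is the technical crux; once this is secured, the contraction and integrability estimates already developed for \eq{infinityine} in \Lemma{lemmathree}, now read with the pointwise norm \eq{rbspacen}, close the argument.
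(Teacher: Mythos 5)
Your reduction of Theorem~\ref{thrandom} to Theorem~\ref{infdn} by composing the fixed-horizon solution with $X(\cdot)$ and stopping at $\tau$ does not work, and the obstruction is not merely the ``technical crux'' you flag at the end --- it is fatal already at the level of the terminal condition. The field $(V,\bar V,\tilde V)$ produced by Theorem~\ref{infdn} satisfies $V(T,x)=H(x)$ for each fixed $x\in R_b^p$; it does \emph{not} satisfy $V(\tau,X(\tau))=H(X(\tau))$ on $\{\tau<T\}$, because at an interior time $\tau$ the backward dynamics force $V(\tau,\cdot)$ to be a nontrivial conditional expectation involving the coefficients over $[\tau,T]$, not the terminal datum $H$. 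Equation \eq{rbspdef} prescribes exactly this terminal value at the random time $\tau$, so the composed-and-stopped process cannot solve it. Independently, even on $[0,\tau)$ the substitution $x\mapsto X(t)$ changes the It\^o decomposition: since $X$ is itself a continuous semimartingale, $dV(t,X(t))$ acquires the terms $\nabla_x V\cdot dX(t)$ and $\tfrac12\,\mathrm{tr}\bigl(V_{xx}\,d\langle X\rangle_t\bigr)$, which are absent from \eq{rbspdef}; optional stopping controls truncation of a martingale at $\tau$ but does nothing to remove these extra terms. Your uniqueness argument fails for the dual reason: a competing solution of \eq{rbspdef} is defined only along the random curve $(t,X(t))$ and does not determine a field on all of $R_b^p$ to which the uniqueness clause of Theorem~\ref{infdn} could be applied.

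The paper's own argument is different in kind: it treats \eq{rbspdef} as a \emph{new} backward equation with bounded random terminal time $\tau$ and terminal datum $H(X(\tau))$, and re-runs the contraction-mapping scheme of Theorem~\ref{infdn} directly in the space $\hat{{\cal Q}}^{2}_{{\cal F}}([0,\tau])$ equipped with the pointwise norm \eq{rbspacen}. The single genuinely new ingredient is the completeness of this space of processes on the random interval $[0,\tau]$, which the paper borrows from Situ's treatment of BSDEs with bounded random stopping times as terminal times. If you want to repair your write-up, that is the route to take: set up the Picard iteration on $[0,\tau]$ from scratch, carrying the martingale representation and the a priori estimates of Lemmas~\ref{martindecom}--\ref{lemmathree} up to the stopping time, rather than attempting to transport the $[0,T]$ solution along the paths of $X$.
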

\begin{proof}
First of all, it follows from the similar discussion as in
Situ~\cite{sit:solbac} that $\bar{Q}_{{\cal F}}([0,\tau])$ is a
Banach space. Then we know that all of the claims stated in
Theorem~\ref{infdn} are true over the space $\bar{Q}_{{\cal
F}}([0,\tau])$, which imply that the claims in the current theorem
are true. $\Box$
\end{proof}
\begin{example}
The solution $V(t,x,\cdot)$ to the B-SPDE in \eq{bspdef} is
described by random surfaces and the solution $V(t,X(t))$ to the
B-SPDE under random environment in \eq{rbspdef} is represented by
random paths, which are shown in Figure~\ref{twouserregion}
presented in the Introduction.
\end{example}

\section{An Illustrative Example in Finance}\label{expm}

In this section, we consider a financial market consisting of two
assets and an external random factor. One asset is supposed to be a
risk-free account whose price $S_{0}(t)$ is subject to the following
ordinary differential equation,
\begin{eqnarray}
&&\left\{\begin{array}{ll}
       dS_{0}(t)=rS_{0}(t)dt,\\
       S_{0}(0)=1,
       \end{array}
\right. \elabel{bankaset}
\end{eqnarray}
where the interest rate $r$ is a nonnegative constant. Another asset
is stock whose price process $S(t)$ satisfies the following SDE for
each $t\in[0,T]$,
\begin{eqnarray}
&&\left\{\begin{array}{ll}
       dS(t)=S(t)\beta(Y(t))dt+S(t)\sigma(Y(t))dW_{1}(t),\\
       S(0)=s>0,
       \end{array}
\right. \elabel{stockassetm}
\end{eqnarray}
where the random factor $Y(t)$ with $t\in[0,T]$ satisfies
\begin{eqnarray}
&&\left\{\begin{array}{ll}
       dY(t)=c(Y(t))dt+d(Y(t))\left(\rho dW_{1}(t)
       +\sqrt{1-\rho^{2}}dW_{2}(t)\right),\\
       Y(0)=y\in R
       \end{array}
\right. \elabel{stockassetmI}
\end{eqnarray}
with $\rho\in(-1,1)$. Moreover, we suppose that the market
coefficients $f=\beta,\sigma,c,d$ satisfy the standard global
Lipschitz and linear growth conditions and $\sigma(y)\geq\kappa>0$
for all $y\in R$ and some positive constant $\kappa$.

Beginning at $t=0$ with an initial endowment $x\in R_{+}$, an
investor invests at any time $t>0$ in the risky and riskless asset.
The present value of the amounts invested are denoted, respectively,
by $\pi_{0}(t)$ and $\pi_{1}(t)$, and then the present value of the
investor's aggregate investment is given by
$X^{\pi}(t)=\pi_{0}(t)+\pi_{1}(t)$, which satisfies (see, e.g.,
Musiela and Zairphopoulou~\cite{muszar:stopar})
\begin{eqnarray}
&&dX^{\pi}(t)=\sigma(t)\pi(t)\cdot(\lambda(t)dt+dW(t))
\elabel{wealtheq}
\end{eqnarray}
where $``\cdot"$ denotes the inner product,
$\pi(t)=(\pi_{0}(t),\pi_{1}(t))$, $dW=(dW_{1},dW_{2})'$, and
\begin{eqnarray}
&&\lambda(t)=\left(\frac{\beta(Y(t))-r}{\sigma(y(t))},0\right)'.
\elabel{lambdaex}
\end{eqnarray}
Moreover, for a given constant $b\geq 1$, let the following $\tau$
be the bankruptcy time for the investor,
\begin{eqnarray}
&&\tau=\inf\{t>0,X^{\pi}(t)<b\}.\elabel{bankr}
\end{eqnarray}

One objective to study the above financial system is to find the
optimal portfolio choice based on maximal expected utility of
terminal wealth over all admissible strategies (see, e.g.,
Merton~\cite{mer:lifpor}), i.e., to solve the following stochastic
dynamic optimization problem,
\begin{eqnarray}
&&V(t,x)=\sup_{{\cal
A}_{\tau}}E_{P}\left[\left.u_{\tau}(X^{\pi}(\tau))\right|{\cal
F}_{t},X^{\pi}(t)=x\right] \elabel{vvvxx}
\end{eqnarray}
where ${\cal A}_{\tau}$ denotes the set of all admissible strategies
$\pi$: $\pi(t)$ is self-financing and $\{{\cal
F}_{t}\}$-progressively measurable, satisfying
\begin{eqnarray}
&&E\left[\int_{0}^{\tau}(\pi(s)\sigma(s))^{2}ds\right]<\infty\;\;
\mbox{and}\;\;X^{\pi}(t)\geq 0\;\;\mbox{with}\;\;t\in[0,\tau],
\nonumber
\end{eqnarray}
and the utility is taken to be the following constant relative risk
aversion (CRRA) case:
\begin{eqnarray}
&&u_{\tau}(x)=\frac{x^{\gamma}}{\gamma}\;\;\mbox{with}\;\;0<\gamma<1,
\gamma\neq 0.\nonumber
\end{eqnarray}
Then it follows from the discussions in Musiela and
Zariphopoulou~\cite{muszar:stopar},
$\emptyset$ksendal~\cite{oks:stodif}, $\emptyset$ksendal and
Sulem~\cite{okssul:appsto} that the value function process defined
in \eq{vvvxx} should satisfy the following B-SPDE,
\begin{eqnarray}
V(t,x)&=&u_{\tau}(X(\tau))+\int_{t}^{\tau}\frac
{\left(V_{x}(s,x)\lambda(t)+\sigma(t)(\sigma(t))^{+}\bar{V}_{x}(s,x)
\right)^{2}}{2V_{xx}(s,x)}ds \elabel{fbspde}\\
&&+\int_{t}^{\tau}(\bar{V}(s,x))'dW(s)\nonumber
\end{eqnarray}
where $\sigma(t)=(\sigma(Y(t)),0)'$,
$(\sigma(t))^{+}=(1/\sigma(Y(t)),0)$, and $dW=(dW_{1},dW_{2})'$.

As pointed out in Musiela and Zariphopoulou~\cite{muszar:stopar},
the B-SPDE in \eq{fbspde} is newly derived and belongs to a class of
strongly nonlinear B-SPDEs (see, e.g., the related discussion in
Lions and Souganidis~\cite{liosou:notaux}). However, based on
Theorem~\ref{infdn} in the previous section of the current paper and
the discussion in Musiela and Zariphopoulou~\cite{muszar:stopar}, we
can show that there exists a unique adapted solution, before a
random bankruptcy time (i.e., over $[0,\tau]$), to the B-SPDE in
\eq{fbspde} over the class of functions satisfying the conditions
required by Theorem~\ref{infdn}. In fact, based on the discussions
in Musiela and Zariphopoulou~\cite{muszar:stopar}, Glosten {\em et
al.}~\cite{glojag:relexp}, we have the following observation that
there is a pair of $V$ and $\bar{V}$ satisfying \eq{fbspde}, i.e.,
\begin{eqnarray}
&&V(t,x)=\frac{1}{\gamma}x^{\gamma}f(t,Y(t))^{\delta}\elabel{vsolution}
\end{eqnarray}
where $f$ is a solution of the following partial differential
equation
\begin{eqnarray}
&&f_{t}+\frac{1}{2}d^{2}(y)f_{yy}+\left(c(y)
+\frac{\rho\gamma\lambda(y)d(y)}{1-\gamma}\right)f_{y}
+\frac{\gamma\lambda^{2}(y)f}{2\delta(1-\gamma)}=0 \nonumber
\end{eqnarray}
with $f(\tau,y)=1$ and
$\delta=(1-\gamma)/(1-\gamma+\rho^{2}\gamma)$, and
\begin{eqnarray}
&&\bar{V}_{1}(t,x)=\frac{\rho\delta}{\gamma}d(Y(t))
f_{y}(t,Y(t))f(t,Y(t))^{\delta-1}
\elabel{barvI}\\
&&\bar{V}_{2}(t,x)=\frac{\delta(1-\rho^{2})^{1/2}}{\gamma}
x^{\gamma}d(Y(t))f_{y}(t,Y(t))f(t,Y(t))^{\delta-1} \elabel{barVII}
\end{eqnarray}
Thus it follows from \eq{vsolution}-\eq{barVII} that $(V,\bar{V})$
is a solution to \eq{fbspde} such that the conditions in
Theorem~\ref{infdn} are satisfied over $[0,\tau]$ and hence it is
the unique adapted solution to \eq{fbspde} over $[0,\tau]$.
Moreover, it follows from Theorem~\ref{thrandom} in the previous
section and the discussion in Musiela and
Zariphopoulou~\cite{muszar:stopar} that, for each $t\leq s\leq\tau$,
the optimal feedback portfolio process is given as follows,
\begin{eqnarray}
&&\pi^{*}(s,X(s))=-\frac{(\sigma(s))^{+}\left(V_{x}(s,X(s))\lambda(s)
+\sigma(s)(\sigma(s))^{+}\bar{V}_{x}(s,X(s))\right)}{V_{xx}(s,X(s))}.
\nonumber
\end{eqnarray}

\vskip 0.6cm \noindent{\bf Acknowledgement} This project is
supported by National Natural Science Foundation of China under
grant No. 10971249.

\end{document}